\newcommand{\inter}{d_I}
\title{Strong Equivalence of the Interleaving and Functional Distortion Metrics for Reeb Graphs}
\titlerunning{Strong Equivalence of Reeb Graph Metrics} 
\author[1]{Ulrich Bauer}
\author[2]{Elizabeth Munch}
\author[3]{Yusu Wang}
\affil[1]{Department of Mathematics,
  Technische Universit\"at M\"unchen\\
  \texttt{mail@ulrich-bauer.org}}
\affil[2]{Department of Mathematics \& Statistics,
  University at Albany -- SUNY\\
  \texttt{emunch@albany.edu}}
\affil[3]{Department of Computer Science and Engineering, The Ohio State University\\
  \texttt{yusu@cse.ohio-state.edu}}
\authorrunning{U.~Bauer, E.~Munch and Y.~Wang} 
\subjclass{F.2.2 Nonnumerical Algorithms and Problems: Geometrical problems and computations}
\keywords{Reeb graph,
interleaving distance,
functional distortion distance}
\date{20th December 2014}
\begin{document}

\maketitle

\begin{abstract}
The Reeb graph is a construction that studies a topological space through the lens of a real valued function.  It has widely been used in applications, however its use on real data means that it is desirable and increasingly necessary to have methods for comparison of Reeb graphs. Recently, several methods to define metrics on the space of Reeb graphs have been presented.  In this paper, we focus on two: the functional distortion distance and the interleaving distance. The former is based on the Gromov--Hausdorff distance, while the latter utilizes the equivalence between Reeb graphs and a particular class of cosheaves. However, both are defined by constructing a near-isomorphism between the two graphs of study.  In this paper, we  show that the two metrics are strongly equivalent on the space of Reeb graphs.  In particular, this gives an immediate proof of bottleneck stability for persistence diagrams in terms of the Reeb graph interleaving distance.

 \end{abstract}

\section{Introduction}

The Reeb graph is a construction that can be used to study a topological space with a real valued function by tracking the relationships between connected components of level sets. 
It was originally developed in the context of Morse theory \cite{Reeb1946}, and was later introduced for shape analysis by Shinagawa et al.~\cite{Shinagawa1991}. 
Since then, it has attracted much attention due to its wide use for various data analysis applications, such as shape comparison \cite{Hilaga2001, Escolano2013}, denoising \cite{Wood2004}, and shape understanding \cite{Dey2013,HA03}; see \cite{Biasotti2008} for a survey. 
Recently, the applications of Reeb graphs have been further broadened to summarizing high-dimensional and/or complex data, in particular, reconstructing non-linear 1-dimensional structure in data \cite{NBPF11,Ge2011,Chazal2014} and summarizing collections of trajectory data \cite{BBKSS13}. 
Its practical applications have also been facilitated by the availability of efficient algorithms for computing the Reeb graph from a piecewise-linear function defined on a simplicial complex  \cite{Parsa2012,Harvey2010,Doraiswamy2012}. 

In addition to the standard construction, a generalization of the Reeb graph construction, known as Mapper, \cite{Singh2007}, has proven extremely useful in the field of topological data analysis \cite{Yao2009, Nicolau2011}.
A variant of Mapper for real-valued functions, called the $\alpha$-Reeb graph, was used in \cite{Chazal2014} to study data sets with 1-dimensional structure.

Given the popularity of the Reeb graph and related constructions for practical data analysis applications, it is desirable and increasingly necessary to understand how robust (stable) these structures are in the presence of noise. 
Consequently, several metrics for comparing Reeb graphs have been proposed recently. 
These include the interleaving distance \cite{deSilva2014}, the functional distortion distance \cite{Bauer2014}, and the combinatorial edit distance \cite{DiFabio2014b}. 
We note that the latter is limited to Reeb graphs resulting from Morse functions defined on surfaces (2-manifolds). 
In addition, Morozov et.~al proposed an interleaving distance for a simpler variant of the Reeb graph, the so-called merge tree \cite{Morozov2013}. 

In this paper, we study the relation between the only two distance measures for general Reeb graphs proposed in the literature:  the functional distortion distance of \cite{Bauer2014} and the interleaving distance of \cite{deSilva2014}. 
The former is based on concepts from metric geometry, and is defined by treating both graphs as metric spaces and inspecting continuous maps between them. 
The latter, on the other hand, is defined using ideas of category theory, utilizing the equivalence between Reeb graphs and a particular class of cosheaves. 
However, in essence, both construct a near-isomorphism between the two input graphs of study. 
In Sections \ref{sec:relation} and \ref{sec:bottleneck}, we explore this connection between the two distances, and show that indeed, the functional distortion distance and the interleaving distances are strongly equivalent on the space of Reeb graphs, meaning that they are within constant factor of each other. 
This immediately leads to the bottleneck stability result for the Reeb graph interleaving distance.

\section{Definitions}

Given a topological space $\X$ with a real valued function $f:\X \to \R$, we  define the Reeb graph of $(\X,f)$ as follows.
We say that two points in $\X$ are equivalent if they are in the same path-connected  component of a level set $f\inv(a)$ for $a \in \R$.
This is notated $x \sim_f y$ or $x \sim y$ if the function is obvious. 
Then the Reeb graph is the quotient space $\X/\sim_f$. 
Note that the Reeb graph inherits a real valued function from its parent space.
See Fig.~\ref{Fig:BasicExample} for an example.
\begin{figure}
\centering
 \includegraphics[scale = .4]{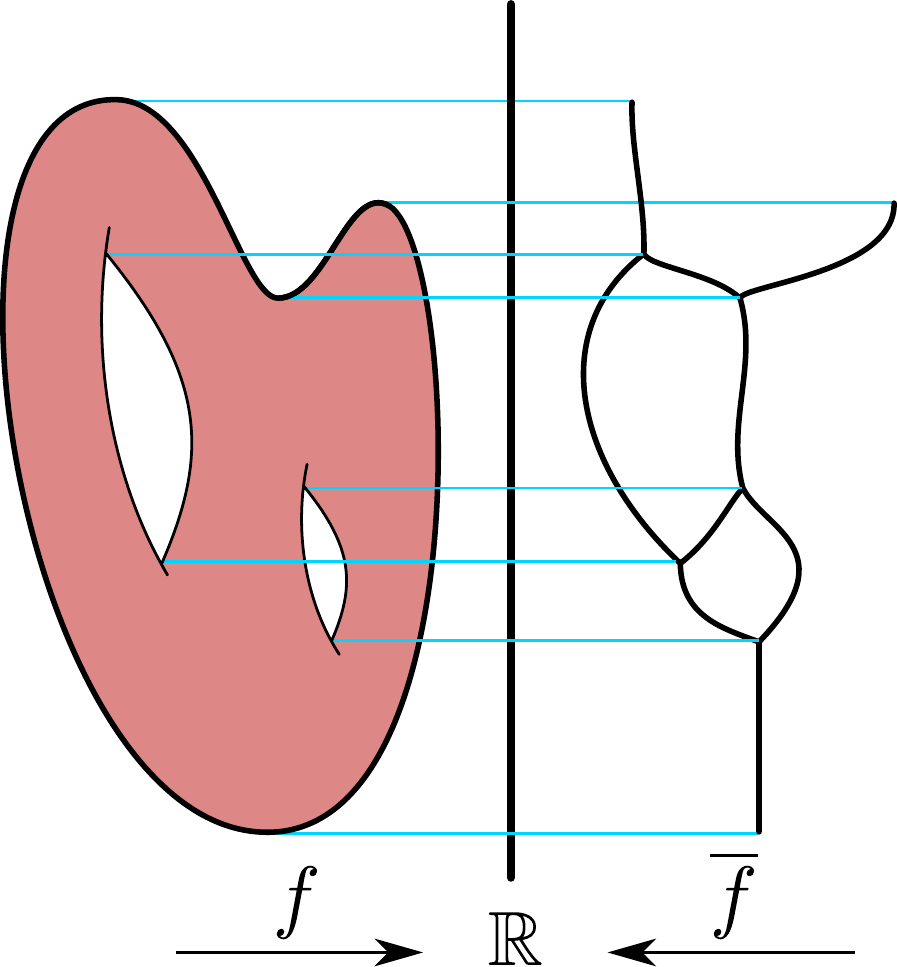}
 \caption{A simple example of the Reeb graph (right) of a space (left).  Here and in all other drawn examples in this paper, the real valued function is denoted by height.}
 \label{Fig:BasicExample}
\end{figure}

\subsection{Category of Reeb Graphs}
For nice enough functions $f:\X \to \R$, such as Morse functions on compact manifolds or PL functions on finite simplicial complexes, the Reeb graph is, in fact, a finite graph. We will tacitly make this assumption on functions throughout the paper.
Thus, we will define the category of Reeb graphs, following \cite{deSilva2014}, intuitively to be finite graphs with real valued functions.
Morphisms will be given by function preserving maps between the underlying spaces as given in the following definition.

\begin{definition}
\label{Def:ReebCat}
An object of the category $\Reeb$ is a finite graph, seen as a topological space, more precisely as a regular CW complex, together with a real valued function that is monotone on edges.  
This will equivalently be written as either $f:\X \to \R$ or $(\X,f)$.
A morphism between $(\X,f)$ and $(\Y,g)$ is a map $\phi:\X \to \Y$ preserving function values, i.e., the following diagram commutes:
\begin{equation*}
\begin{tikzcd}
 \X \ar{r}{\phi} \ar{dr}{f}& \Y \ar{d}{g}\\
 & \R
\end{tikzcd}
\end{equation*}
\end{definition}
 
Note that because we assume that the function is monotone when restricted to the edges, we will often just think of the function as being given by the values on the vertices.
As an aside, notice that the process of taking the Reeb graph of a space with a function is an isomorphism in $\Reeb$.

\subsection{Interleaving Distance}

Given a Reeb graph $(\X,f)$, let $\X_\e$ denote the space $\X \times [-\e,\e]$ and define the $\e$-smoothing of $(\X,f)$ as the Reeb graph of the function
\begin{equation*}
 \begin{array}{cccc}
 f_\e: & \X_\e &\to &\R\\
  & (x,t) & \longmapsto & f(x) + t.
 \end{array}
\end{equation*}
That is, the $\e$-smoothing is the quotient space $\X_\e/\sim_{f_\e}$.
Denote this space by $\TT_\e(\X,f)$ and notice that $\TT_\e(\TT_\e(\X,f)) \iso \TT_{2\e}(\X,f)$.
Sometimes when we are focused on the underlying topological space, we will notat this as $\TT_\e(\X)$.
See Fig.~\ref{Fig:SmoothedExample} for an example.

\begin{figure}
 \centering
 \includegraphics[scale = .8]{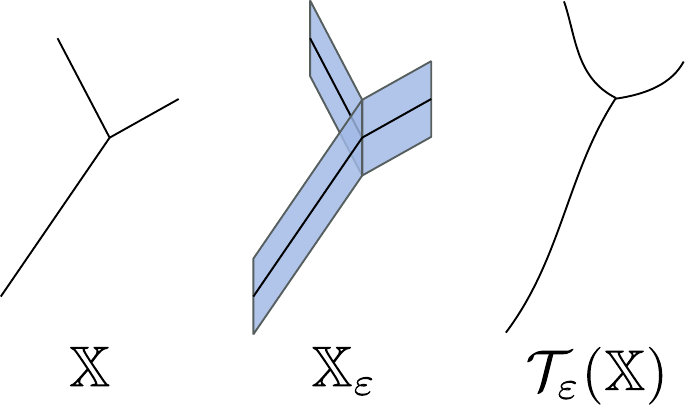}
 \caption{An example of a smoothed Reeb graph.  At left is the original graph $\X$ with function $f$ given by height.  The middle space is $\X_\e = \X \times [-\e,\e]$ with the function $f_\e(x,t) = f(x)+t$ still given by height.  At right is the Reeb graph of $(\X_\e,f_\e)$, which is the smoothed Reeb graph $\TT_\e(\X)$.}
 \label{Fig:SmoothedExample}
\end{figure}

An $\e$-interleaving of $(\X,f)$ and $(\Y,g)$ is a pair of function preserving maps $\phi:(\X,f) \to \TT_\e(\Y, g)$ and $\psi:(\Y,g) \to \TT_\e(\X, f)$ with the following requirements.
First, note that there is a natural map $\iota: (\X,f) \to \TT_\e(\X,f)$ which sends $x\in \X$ to the equivalence class of $(x,0)$ in $\X_\e/\sim_{f_\e}$.
Similarly, there is another natural map on $(\Y,g)$, which we abuse notation and also denote $\iota:(\Y,g) \to \TT_\e(\Y,g)$.
There exists a map $\phi_\e:\TT_\e(\X,f) \to \TT_{2\e}(\Y,g)$, uniquely defined on $\im \iota$ by the map $\phi$, such that the diagram
\begin{equation*}
\begin{tikzcd}
 (\X,f) \ar{r}{\iota}\ar{d}{\phi} &  \TT_\e(\X,f)\ar{d}{\phi_\e} \\
 \TT_\e(\Y,g)  \ar{r}{\iota_\e} & \TT_{2\e}(\Y,g)
\end{tikzcd}
\end{equation*}
commutes.
The map $\psi_{\e}:\TT_\e(\Y,g) \to \TT_{2\e}(\X,f)$ is defined similarly.
Then we have the definition of the $\e$-interleaving.

\begin{definition}[Geometric $\e$-Interleaving]
The maps $\phi:(\X,f) \to \TT_\e(\Y,g)$ and  $\psi:(\Y,g) \to \TT_\e(\X,f)$ are a \emph{(geometric) $\e$-interleaving} if the diagram
\begin{equation*}
\begin{tikzcd}[/tikz/commutative diagrams/row sep=large]
 (\X,f) \ar{r}{\iota} \ar{dr}[very near start]{\phi}  & 
 \TT_\e(\X,f) \ar{r}{\iota_\e} \ar{dr}[very near start]{\phi_\e} & 
 \TT_{2\e}(\X,f) \\
 (\Y,g) \ar{r}{\iota}  \ar{ur}[very near start]{\psi} & 
 \TT_\e(\Y,g) \ar{r}{\iota_\e} \ar{ur}[very near start]{\psi_\e} & 
 \TT_{2\e}(\Y,g)
\end{tikzcd}
\end{equation*}
commutes.
\end{definition}

We can use this definition of interleavings to define a distance on Reeb graphs.
\begin{definition}[Interleaving Distance, \cite{deSilva2014}]
The \emph{interleaving distance} between two Reeb graphs $(\X,f)$ and $(\Y,g)$ is defined to be
\begin{equation*}
\inter(\Ffunc,\Gfunc) =
\inf\left\{ \e \mid \text{there exists an $\e$-interleaving between $(\X,f),(\Y,g)$} \right\}.
 \end{equation*}
\end{definition}

The definition of the interleaving distance was motivated by the cosheaf structure of Reeb graphs.
It was shown in \cite{deSilva2014} that the category of Reeb graphs is equivalent to a particular class of cosheaves, which can be thought of as functors $F:\Int \to \Set$ giving a set for each open interval.
The idea is that given a Reeb graph $f = (\X,f)$, we can construct the associated cosheaf $\Ffunc = \pi_0 \circ f\inv$.
This equivalence allows us to work with either the topological construction or the category theoretic, whichever is easier or more appropriate.
Specifics for this can be found in Appendix \ref{Sect:Cosheaves}, and an excellent introduction can be found in \cite{Curry2013}.

Let the $\e$-thickening of an interval $I = (a,b)$ be denoted by $I^\e = (a-\e,b+\e)$.
Then we can also consider the $\e$-interleavings for the cosheaves.
\begin{definition}[Cosheaf $\e$-Interleaving]
Given Reeb graphs $(\X,f)$ and $(\Y,g)$, 
an {\bf $\e$-interleaving} between the corresponding cosheaves $\Ffunc = \pi_0\circ f\inv$ and $\Gfunc = \pi_0 \circ g\inv$ is given by two families of maps
\begin{align*}
\phi_I:& \pi_0\circ f\inv(I) \to \pi_0\circ g\inv(I^\e),\\
\psi_I:&  \pi_0\circ g\inv(I) \to  \pi_0\circ f\inv(I^\e)
\end{align*}
for each open interval $I$.
These must be natural with respect to inclusions $I \subseteq J$, i.e., the following diagrams commute, where the  vertical maps are induced by the inclusions $\iota: I \hookrightarrow J$ and $\iota_\e: I^\e \hookrightarrow J^\e$:
\begin{equation*}
 \begin{tikzcd}
 \pi_0(f\inv(I)) \ar{r}{\phi_I} \ar{d}{\iota^*}&  \pi_0(g\inv(I^\e))  \ar{d}{\iota^*} &&
 \pi_0(g\inv(I)) \ar{r}{\psi_I} \ar{d}{\iota^*}&  \pi_0(f\inv(I^\e))  \ar{d}{\iota^*}\\
 \pi_0(f\inv(J)) \ar{r}{\phi_J}&  \pi_0(g\inv(J^\e)) &&
 \pi_0(g\inv(J)) \ar{r}{\psi_J}&  \pi_0(f\inv(J^\e)) \\
 \end{tikzcd}
\end{equation*}
Moreover, we require that
\begin{equation*}
\psi_{I^\e} \circ \phi_I : \pi_0\circ f\inv(I) \to \pi_0\circ f \inv (I^\e)
\end{equation*}
is the map induced by the inclusions $f\inv(I) \subset f\inv(I^\e)$ and 
\begin{equation*}
\phi_{I^\e} \circ \psi_I : \pi_0\circ g\inv(I) \to \pi_0\circ g \inv (I^\e)
\end{equation*}
is the map induced by the inclusions $g\inv(I) \subset g\inv(I^\e)$
for all $I$. 
\end{definition}
First, notice that for $\e=0$ this is is precisely an isomorphism between $\Ffunc, \Gfunc$.
Note that an interleaving could be equivalently defined as a pair of natural transformations between the appropriate functors.
Then, the interleaving distance can be equivalently defined as 
\begin{equation*}
\inter((\X,f),(\Y,g)) =
\inf\left\{ \e \mid \text{there exists an $\e$-interleaving between $\Ffunc = \pi_0\circ f\inv,\Gfunc = \pi_0\circ g\inv$} \right\}
 \end{equation*}
 for two cosheaves $\Ffunc, \Gfunc: \Int \to \Set$.
We will therefore abuse notation and also denote this as $\inter((\X,f),(\Y,g))$.

\subsection{Functional Distortion Distance}

For a given path $\pi$ from $u$ to $v$ in $(\X,f) \in \Reeb$, we define the height of the path to be 
\begin{equation*}
 \textrm{height}(\pi) = \max_{x \in \pi} f(x) - \min_{x \in \pi} f(x).
\end{equation*}
Then we define the distance
\begin{equation*}
 d_f(u,v) = \min_{\pi:u \to v} \textrm{height}(\pi)
\end{equation*}
where $\pi$ ranges over all paths from $u$ to $v$ in $\X$.
Note that this can be equivalently be defined by the minimum length of an interval $I$ such that $u$ and $v$ are in the same connected component of $f\inv(I)$.  

The functional distortion distance between $(\X,f)$ and $(\Y,g)$ is now defined as follows:
\begin{definition}
 [Functional Distortion Distance, \cite{Bauer2014}]
 \label{Def:FuncDistortion}
 
 Given $(\X,f),(\Y,g) \in \Reeb$ and maps $\phi:\X \to \Y$ and $\psi:\Y\to \X$, let 
 \begin{equation*}
  C(\phi,\psi) = \{ (x,y) \in \X \times \Y \mid \phi(x) = y \textrm { or } x = \psi(y)\}
 \end{equation*}
 and 
 \begin{equation*}
  D(\phi,\psi)  = \sup_{\substack{(x,y),(x',y')\\ \in C(\phi,\psi)}} \frac{1}{2} \left| d_f(x,x') - d_g(y,y')\right|.
 \end{equation*}
 Then the \emph{functional distortion distance} is defined to be 
 \begin{equation*}
  d_{FD}(f,g) = \inf_{\phi,\psi} \max \{ D(\phi,\psi),\|f-g \circ \phi\|_\infty, \|g-f \circ \psi\|_\infty \}.
 \end{equation*}

\end{definition}
Note that since the maps $\phi,\psi$ are not required to preserve the function values, they are not Reeb graph morphisms in the sense of Definition~\ref{Def:ReebCat}.


\section{Multivalued Maps and Continuous Selections}

A multivalued map (or multimap) $F: X \to Y$ is a correspondence which sends a point $x \in X$ to a nonempty set $F(x) \subset Y$.
A selection of a multimap is a singlevalued function $f:X \to Y$ such that $f(x) \in F(x)$ for every $x \in X$.
See  \cite{Repovs1998} for an introduction to multimaps.

Note that using the axiom of choice, a selection always exists; the trick is to find a continuous selection.
The Michael selection theorem gives a criterion for a multimap to have a continuous selection.
However, in order to state it, we will need several definitions.

\begin{definition}
 A family $\SS$ of subsets of a topological space $Y$ is equi-locally $n$-connected if for every $S \in \SS$, every $y \in S$, and every neighborhood $W$ of $y$, there is a neighborhood $V$ of $y$ such that $V \subset W$ and for every $S' \in \SS$ such that $V \cap S' \neq \emptyset$, every continuous mapping of the $m$-sphere $\Sbb^m$ into $S' \cap V$ is null-homotopic in $S' \cap W$ for $m \leq n$. This is notated $\SS \in \ELC^n$.
\end{definition}
In particular, we will be needing the case where $\SS \in \ELC^0$, so the final requirement amounts to checking that $S'\cap V$ is path connected.

\begin{definition}
A multivalued map $F: X \to Y$ is lower semicontinuous (LSC) if for every open $U \subset Y$, the set $F\inv(U) = \{x \in X \mid F(x) \cap U \neq \emptyset\}$ is open in $X$.
\end{definition}

Finally we can state the Michael selection theorem.  
Note that we are working with a space of covering dimension 1, so we paraphrase the more general theorem here to relate it to our context.
\begin{theorem}[Michael 1956\cite{Michael1956}]
\label{Thm:Selection}
 A multivalued mapping $F : X \to Y$ admits a continuous single-valued selection provided that the following conditions are satisfied:
 \begin{enumerate}
  \item $X$ is a paracompact space with $\dim(X) \leq 1$;
 \item $Y$ is a completely metrizable space;
 \item $F$ is an LSC mapping;
 \item 
 For every $x \in X$, $F(x)$ is a 0-connected (path connected) subset of $Y$; and
 \item 
 The family of values $\{F(x)\}_{ x \in X}$ is $\ELC^0$.
 \end{enumerate}

\end{theorem}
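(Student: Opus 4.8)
The plan is to construct the selection as a uniform limit of a sequence of continuous \emph{approximate} selections, using the complete metrizability of $Y$ to force convergence. Fix a complete metric $d$ on $Y$, and call a continuous $f\colon X\to Y$ an \emph{$\eta$-selection} of $F$ if $d(f(x),F(x))<\eta$ for every $x\in X$. The goal is to produce $\eta_k$-selections $f_k$ with $\eta_k\to 0$ and with $\sup_x d(f_k(x),f_{k+1}(x))$ summable in $k$; then $f=\lim f_k$ exists, is continuous, and since $\eta_k\to 0$ each $f(x)$ lies in $\overline{F(x)}=F(x)$ (taking the values to be closed, as is implicit in the $\ELC^0$ hypothesis package), so $f$ is the desired continuous selection.

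The basic construction — producing a single approximate selection of a given LSC multimap with path-connected $\ELC^0$ values — is where conditions (1), (4), (5) enter. Given a target $\eta>0$, use lower semicontinuity together with the $\ELC^0$ condition to choose an open cover $\mathcal U$ of $X$ fine enough that: for each $U\in\mathcal U$ one can pick a point $y_U$ lying $\eta$-close to $F(x)$ for \emph{every} $x\in U$, and for each pair $U,V$ with $U\cap V\neq\emptyset$ the chosen points $y_U,y_V$ can be joined by a path in $Y$ of diameter $<\eta$ staying $\eta$-close to the relevant values. Since $\dim X\le 1$, after passing to a locally finite refinement we may assume the nerve $N$ of $\mathcal U$ is at most $1$-dimensional, i.e.\ a graph. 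Define $g\colon |N|\to Y$ by sending the vertex $U$ to $y_U$ and extending over each edge by the chosen short path — this requires only joining points by paths, which is exactly what $0$-connectivity of the values plus $\ELC^0$ supplies, and no higher cells ever need to be filled. Composing $g$ with the canonical map $X\to |N|$ associated to a partition of unity subordinate to $\mathcal U$ yields a continuous $\eta$-selection $f$ of $F$.

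To iterate, replace $F$ by $F'(x)=F(x)\cap \overline B(f_k(x),\delta_k)$ for a suitable $\delta_k$ slightly larger than $\eta_k$, and apply the basic construction to $F'$ to obtain $f_{k+1}$, an $\eta_{k+1}$-selection of $F'$ (hence of $F$) satisfying $d(f_{k+1}(x),f_k(x))\le \delta_k+\eta_{k+1}$; choosing $\eta_k,\delta_k$ to decrease geometrically makes $(f_k)$ uniformly Cauchy and finishes the proof. The technical heart — and the step I expect to be the main obstacle — is verifying that this restriction to small balls preserves the hypotheses: $F'$ is still LSC, its values are still path-connected, and $\{F'(x)\}$ is still $\ELC^0$, all with quantitative control on the constants. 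This is precisely where equi-local $0$-connectedness is used essentially, since it is what allows two nearby points of $F(x)$ lying in the small ball to be reconnected within a controlled small set. Everything else is bookkeeping of the $\varepsilon$'s and the standard nerve / partition-of-unity machinery, substantially simplified here by the hypothesis $\dim X\le 1$.
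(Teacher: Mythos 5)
The paper does not prove this statement at all: it is Michael's finite-dimensional selection theorem, quoted in paraphrased form from the cited 1956 paper, so there is no internal proof to compare you against. Judged on its own terms, your outline reproduces the classical strategy: build continuous approximate selections from the nerve of a fine locally finite open cover (of order at most $2$ because $\dim X \leq 1$, so only vertices and edges of the nerve need to be mapped, which is exactly what $0$-connectedness plus $\ELC^0$ can supply), then iterate and use complete metrizability to pass to a uniform limit. As an outline this is the right shape of argument.

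However, the step you yourself flag as the technical heart is a genuine gap rather than bookkeeping. The restricted multimap $F'(x) = F(x) \cap \overline{B}(f_k(x),\delta_k)$ need not satisfy the hypotheses: equi-local $0$-connectedness only says that, given $y$ and a neighborhood $W$, there is a smaller $V$ such that points of a member $S'$ lying in $V \cap S'$ can be joined \emph{inside $W \cap S'$}; the connecting path may leave the ball, so $F'(x)$ can fail to be path-connected, and lower semicontinuity of $F'$ can also fail. The analogous restriction trick works in the convex-valued Banach-space version of Michael's theorem precisely because intersecting with a ball preserves convexity, and that has no analogue here. Michael's actual argument does not restrict the multimap; it constructs the $(k+1)$-st approximate selection directly so as to be uniformly close to the $k$-th, choosing the new vertex images and edge paths near the previous approximate selection using the equi-$\mathrm{LC}^{n}$ data with quantitative moduli, and that construction is where the real work lies, so your proposal as written does not close. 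A second, smaller issue: your limit only lands in $\overline{F(x)}$, so you need the values to be closed (or complete), a hypothesis not present in the statement as given; it must either be added, as in Michael's original formulation, or handled by an additional argument rather than declared implicit.
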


\section{$\e$-Interleaving and Functional Distortion}
\label{sec:relation}

In order to prove the main result, Theorem~\ref{Thm:Main}, we will prove each inequality separately as Lemmas \ref{Lem:Inter leq FD} and \ref{Lem:FD leq Inter} .
\subsection{The Easy Direction}

\begin{lemma}
\label{Lem:Inter leq FD}
Let $(\X,f),(\Y,g) \in \Reeb$.
Then 
\begin{equation*}
 \inter(f,g) \leq d_{FD}(f,g).
\end{equation*}

\end{lemma}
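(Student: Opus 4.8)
The plan is to start from a functional distortion pair $(\phi,\psi)$ achieving distortion close to $d_{FD}(f,g)=:\delta$, and produce from it a cosheaf $\e$-interleaving for any $\e>\delta$, which by definition bounds $\inter(f,g)$. The key observation is that although $\phi$ and $\psi$ are not function-preserving, the bound $\|f-g\circ\phi\|_\infty\le\delta$ means they "almost" are: $\phi$ moves function values by at most $\delta$. So the first step is to check that $\phi$ maps each connected component of $f^{-1}(I)$ into a single connected component of $g^{-1}(I^\delta)$, where $I^\delta$ is the $\delta$-thickening; indeed, a connected set on which $f$ takes values in $I$ is sent by the continuous map $\phi$ to a connected set on which $g$ takes values in $I^\delta$. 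This gives a well-defined map $\phi_I:\pi_0(f^{-1}(I))\to\pi_0(g^{-1}(I^\delta))$ on connected components (and symmetrically $\psi_I$), and one uses $\e$ slightly larger than $\delta$ if one wants strict containment of open intervals, or argues directly with $\delta$ and takes the infimum.

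Next I would verify the two structural requirements of a cosheaf $\e$-interleaving. Naturality with respect to inclusions $I\subseteq J$ is essentially formal: if a component $C$ of $f^{-1}(I)$ sits inside a component $C'$ of $f^{-1}(J)$, then $\phi(C)\subseteq\phi(C')$, and taking $\pi_0$ of these inclusions gives the commuting square, since everything is induced by honest inclusions of spaces composed with the single map $\phi$. The triangle conditions $\psi_{I^\e}\circ\phi_I = (\text{inclusion-induced map})$ and its mirror are where the distortion term $D(\phi,\psi)$ and the full strength of the setup are used. Here the point is that $\psi\circ\phi:\X\to\X$ need not be the identity, but we must show that for a component $C$ of $f^{-1}(I)$, the component of $f^{-1}(I^{2\e})$ containing $\psi(\phi(C))$ is the same as the one containing $C$ itself. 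This follows because $C(\phi,\psi)$ contains pairs $(x,\phi(x))$ and $(\psi(y),y)$, so for $x\in C$ the pair $(x,\phi(x))$ and the pair $(\psi(\phi(x)),\phi(x))$ both lie in $C(\phi,\psi)$ with the same second coordinate, hence $d_f(x,\psi(\phi(x)))\le 2D(\phi,\psi)\le 2\delta$ by taking $y=y'=\phi(x)$ in the definition of $D$; thus $x$ and $\psi(\phi(x))$ lie in a common connected component of $f^{-1}$ of an interval of length $\le 2\delta$ around $f(x)$, which is contained in $I^{2\e}$. So the two points are identified in $\pi_0(f^{-1}(I^{2\e}))$, giving exactly the inclusion-induced map.

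The main obstacle, and the step that needs the most care, is the bookkeeping around open-versus-closed intervals and the precise thickening constants: the cosheaf interleaving is phrased with open intervals $I=(a,b)$ and thickening $I^\e=(a-\e,b+\e)$, while $d_f$ is defined via minimal-length intervals which are naturally closed, and the functional distortion bounds are non-strict ($\le\delta$). I expect to handle this by proving the statement for every $\e>\delta$ — replacing $\delta$ by $\e$ throughout so that all the "$\le\delta$" estimates become strict "$<\e$" estimates that keep images safely inside open thickened intervals — and then concluding $\inter(f,g)\le\e$ for all such $\e$, hence $\inter(f,g)\le\delta=d_{FD}(f,g)$ by taking the infimum. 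Everything else (continuity of $\phi$, connectedness being preserved, $\pi_0$ being functorial) is routine.
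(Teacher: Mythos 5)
Your proposal is correct and follows essentially the same route as the paper: a functional-distortion pair with all quantities at most $\e$ (for any $\e > d_{FD}(f,g)$) induces maps $\pi_0(f\inv(I)) \to \pi_0(g\inv(I^\e))$ and $\pi_0(g\inv(I)) \to \pi_0(f\inv(I^\e))$, naturality is formal from functoriality of $\pi_0$, and one concludes by letting $\e \to d_{FD}(f,g)$. If anything, your verification of the triangle identities via $d_f(x,\psi\phi(x)) \le 2D(\phi,\psi)$ (taking the pairs $(x,\phi(x))$ and $(\psi\phi(x),\phi(x))$ in $C(\phi,\psi)$) is spelled out more explicitly than in the paper's own proof, which compresses this step into an appeal to functoriality.
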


\begin{proof}
Let $\e > d_{FD}(f,g)$.
By definition of the functional distortion metric, there are maps
 \begin{equation*}
  \begin{tikzcd}
   \X \ar[start anchor=north east, end anchor=north west]{r}{\phi}
          & \Y \ar[start anchor=south west, end anchor=south east]{l}{\psi}
  \end{tikzcd}
 \end{equation*}
which satisfy the requirements of Definition~\ref{Def:FuncDistortion}, in particular, $\max\{\|f-g\phi\|_\infty, \|g-f\psi\|_\infty\} \leq \e$.
Then it is obvious that $\phi(f\inv(I)) \subset g\inv(I^\e)$ and $\psi(g\inv(I)) \subset f\inv(I_\e)$ for all intervals $I \subset \R$.
This implies that there is a map induced by $\phi$
\begin{equation*}
 \begin{array}{rccc}
  \phi^*_I: &  \pi_0(f\inv(I)) & \longrightarrow & \pi_0(g\inv(I^\e))
 \end{array}
\end{equation*}
and a map induced by $\psi$
\begin{equation*}
 \begin{array}{rccc}
  \psi^*_I: &  \pi_0(g\inv(I)) & \longrightarrow & \pi_0(f\inv(I^\e))
 \end{array}
\end{equation*}
for every $I$.
By the functoriality of $\pi_0$, these maps commute with the maps induced by the inclusions $\iota: I \hookrightarrow J$ and $\iota_\e: I^\e \hookrightarrow J^\e$; that is, the diagrams
\begin{equation*}
 \begin{tikzcd}
 \pi_0(f\inv(I)) \ar{r}{\phi^*_I} \ar{d}{\iota^*}&  \pi_0(g\inv(I^\e))  \ar{d}{\iota_\e^*} &&
 \pi_0(g\inv(I)) \ar{r}{\psi^*_I} \ar{d}{\iota^*}&  \pi_0(f\inv(I^\e))  \ar{d}{\iota_\e^*}\\
 \pi_0(f\inv(J)) \ar{r}{\phi^*_J}&  \pi_0(g\inv(J^\e)) &&
 \pi_0(g\inv(J)) \ar{r}{\psi^*_J}&  \pi_0(f\inv(J^\e)) \\
 \end{tikzcd}
\end{equation*}
commute.
In addition, the functoriality of $\pi_0$ implies that the diagrams
\begin{equation*}
 \begin{tikzcd}
 \pi_0f\inv(I) \ar{r}\ar{dr} & \pi_0 g\inv(I^\e) \ar{d} &&  \pi_0g\inv(I) \ar{r} \ar{dr} & \pi_0 f\inv(I^\e) \ar{d} \\
 & \pi_0f\inv(I^{2\e}) && & \pi_0g\inv(I^{2\e}) 
\end{tikzcd}
\end{equation*}
commute.
These are exactly the properties necessary to call $\phi^*$ and $\psi^*$ an $\e$-interleaving of the associated cosheaves. Since the above holds for any $\e > d_{FD}(f,g)$, we conclude $\inter(f,g) \leq \e = d_{FD}(f,g)$.
\end{proof}

\subsection{The Hard Direction}

In order to show $d_{FD}((\X,f), (\Y,g)) \leq \inter((\X,f), (\Y,g)) $, we need to start with an $\e$-interleaving,  $\phi:(\X,f) \to \TT_\e(\Y,g)$ and $\psi:(\Y,g) \to \TT_\e(\X,f)$, and construct a pair of maps satisfying the requirements of the functional distortion distance.
To do this, note that the map $\phi$ induces a multimap $\bar \phi: \X \to \Y_\e$, which sends a point $x$ to the entire equivalence class of $\phi(x)$, thought of as a subset of $\Y_\e$.
Concretely, letting $q:\Y_\e \to \TT_\e(\Y)$ denote the Reeb graph quotient map, we have $\bar\phi = q^{-1}\circ \phi$.
For any $\delta$, we can construct the multimap, $\bar\phi_\delta:\X \to \Y_\e$ which sends $x$ to $\bar\phi(B_\delta(x))$, where $B_\delta(x) = \{x' \mid d_f(x,x')<\delta\}$. Explicitly, we have
\begin{equation*}
\bar\phi_\delta(x) = \{(y',t') \in \Y_\e \mid (y',t') \in \bar\phi(x'), d_f(x,x')<\delta\},
\end{equation*}
See Fig.~\ref{Fig:DeltaMap} for an example.

\begin{figure}
 \centering
 \includegraphics[scale = .8]{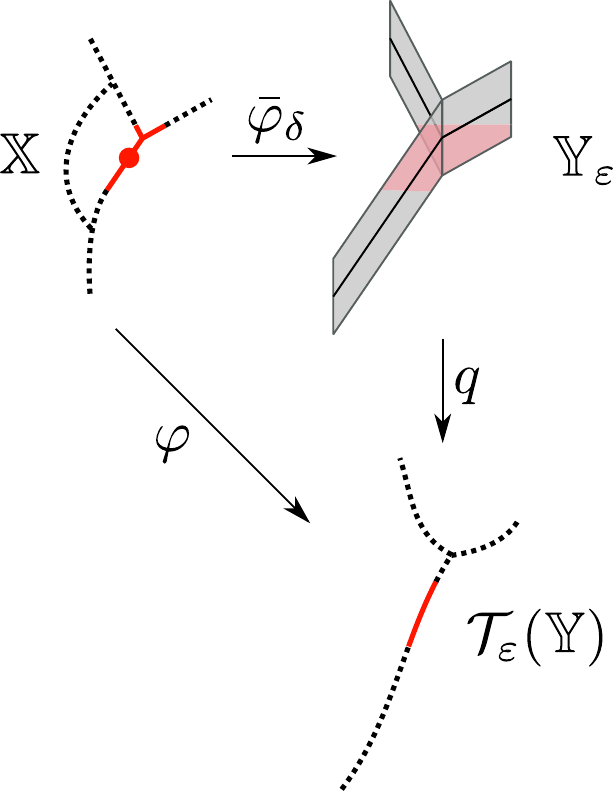}
 \caption{An example for determining the map $\bar\phi_\delta$.  Given the red point $x \in \X$, the red solid region in $\X$ is $B_\delta(x)$.  Then we can look at $\phi(B_\delta(x))$, the red region in $\TT_\e(\Y)$.  The set $\bar\phi_\delta(x)$ in $\Y_\e$ consists of the points which map into $\phi(B_\delta(x))$ in $\TT_\e(\Y)$ under the quotient map $q$. }
 \label{Fig:DeltaMap}
\end{figure}

We want to show that the map $\bar \phi_\delta: \X \to \Y_\e$ satisfies the assumptions of Theorem~\ref{Thm:Selection}. 
\begin{enumerate}
 \item 
 Since $\X$ is a finite graph, it is compact and thus trivially paracompact.
In addition, because it is a graph, it has covering dimension 1. 
\item
We can embed the graph $\Y$ in $\R^3$.  
In this context, it is a closed subset of a completely metrizable space, and thus is completely metrizable.
Therefore, $\Y_\e$ is also completely metrizable since it is the product of two completely metrizable spaces.
\item  To show that $\bar\phi_\delta$ is LSC, let $U \subset \Y_\e$ open and $x \in \bar\phi_\delta\inv(U)$ given.
 This means that there is an $x' \in B_\delta(x)$ such that $\bar\phi(x')\cap U \neq \emptyset$.
 Let $r>0$ be a radius such that $B_r(x')\subset B_\delta(x)$.  
 In particular, this means that $d_f(x,x')<\delta-r$.
 We now want to show that $B_r(x) \subset \bar\phi_\delta\inv(U)$.
 Let $x'' \in B_r(x)$.  
 We know that $x' \in B_\delta(x'')$ since 
 \begin{equation*}
  d_f(x',x'') \leq d_f(x',x) + d_f(x,x'') <r + (\delta-r) = \delta.
 \end{equation*}
 Then since $\bar\phi(x')\cap U \neq \emptyset$ and $x' \in B_\delta(x'')$, we must have $x'' \in \bar\phi_\delta\inv(U)$.

\item 
Let $q:\Y_\e \to \TT_\e(\Y)$ be the quotient map.  
Then $q\circ\pbd(x) = \phi(B_\delta(x))$ is the image of a connected component under a continuous map and is therefore connected.
Since $\phi(x) \subset \TT_\e(\Y)$ is by definition the image of a connected component of $\X$, it is also connected.
So $\pbd(x)$ can be thought of as a fibration with base space $\phi(B_\delta(x))$ and fibers $\pb(x')$.
Since the fibers are connected by definition, and the base is connected, the total space is connected.

\item
As checking this property is by far the most complicated, we prove it in Lemma \ref{Lem:ELC}.

\end{enumerate}

\begin{lemma}
\label{Lem:ELC}
 The family of values $\{\pbd(x)\}_{ x \in \X}$ is $\ELC^0$.
\end{lemma}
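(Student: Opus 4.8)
The plan is to verify the $\ELC^0$ condition directly from the definition, exploiting the fact that the relevant sets live inside the very concrete space $\Y_\e = \Y \times [-\e, \e]$, where $\Y$ is a finite graph. So fix $x \in \X$, a point $p = (y, t) \in \pbd(x)$, and a neighborhood $W$ of $p$ in $\Y_\e$. Since $\Y$ is a finite graph, we may shrink $W$ so that $W = W_\Y \times (t - \eta, t + \eta)$ where $W_\Y$ is a small neighborhood of $y$ in $\Y$: if $y$ is interior to an edge, $W_\Y$ is an open interval; if $y$ is a vertex of degree $d$, $W_\Y$ is a "star" — a wedge of $d$ half-open intervals — which is contractible, hence $0$-connected, and so is any product of it with an interval. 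I would first record this local-structure observation, since it is what makes the argument go through.

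**Next** I would choose the witness neighborhood $V$ of $p$. The natural guess is to take $V = V_\Y \times (t - \eta/2, t + \eta/2)$ with $V_\Y \subset W_\Y$ an even smaller star (or interval) around $y$, chosen small enough that $V_\Y$ is ``radially connected to $y$ within $W_\Y$'' — i.e.\ every point of $V_\Y$ can be joined to $y$ by a monotone path inside $W_\Y$. The key claim to establish is then: for every $x' \in \X$, if $\pbd(x') \cap V \neq \emptyset$, then $\pbd(x') \cap W$ is path connected (this gives the $\ELC^0$ condition, since a path-connected set is certainly one in which any map of $\Sbb^0$ is null-homotopic). The reason this should be true: $\pbd(x') \cap V$ nonempty means some point of $\phi(B_\delta(x'))$ lies in $q(V)$; but $\phi(B_\delta(x'))$ is connected in $\TT_\e(\Y)$, and the part of it meeting the small neighborhood $q(W)$ — which deformation retracts onto the single Reeb-graph point $q(p)$ once $W$ is small — must itself be connected and ``anchored'' at $q(p)$. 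Pulling back through $q$, and using that the fibers $\pb(x'')$ are connected, one concludes $\pbd(x') \cap W$ is connected. I would make this precise by showing $q(W)$, for $W$ small enough, is a connected neighborhood of $q(p)$ in $\TT_\e(\Y)$ whose preimage structure is controlled, so that $\pbd(x') \cap W = q^{-1}(\phi(B_\delta(x')) \cap q(W)) \cap W$ is connected whenever it meets $V$.

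**The main obstacle** I anticipate is that $q(W)$ need not be open in $\TT_\e(\Y)$, and more seriously that the quotient map $q$ can identify points of $W$ that are far apart in $\Y_\e$, so ``$\pbd(x') \cap W$'' is not simply a product neighborhood — it is the intersection of $W$ with a union of fibers, and different fibers passing through $W$ could in principle fail to be linked inside $W$. Handling this requires choosing $W$ small enough (in both the $\Y$ and the $[-\e,\e]$ directions) that it meets each fiber of $q$ in at most one connected piece, equivalently that no nontrivial identifications happen within $W$; this is possible precisely because $\Y$ is a finite graph and the equivalence classes of $\sim_{f_\e}$ are level-set components, so locally near $p$ the quotient is a homeomorphism onto its image once we are below the scale of the nearest critical value of $f_\e$. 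Once that separation is arranged, the connectedness of $\pbd(x') \cap W$ follows from the connectedness of $\phi(B_\delta(x'))$ together with connectedness of the fibers, exactly as in item (4) of the checklist above. I would therefore structure the proof as: (i) local normal form for a small $W$ around $p$; (ii) choice of $V \subset W$; (iii) the path-connectedness claim for $\pbd(x') \cap W$, reduced via $q$ to connectedness of an intersection of the connected set $\phi(B_\delta(x'))$ with the contractible neighborhood $q(W)$; (iv) conclude $\ELC^0$.
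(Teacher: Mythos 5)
Your reduction of the $\ELC^0$ condition to the claim ``if $\pbd(x')\cap V\neq\emptyset$ then $\pbd(x')\cap W$ is path connected'' is formally adequate, but the argument you give for that claim in step (iii) has a genuine gap. You assert that, once $W$ is small, $\phi(B_\delta(x'))\cap q(W)$ is connected and ``anchored at $q(p)$''. Neither part is justified: for $x'\neq x$ there is no reason that $q(p)$ lies in $\phi(B_\delta(x'))$ at all (you only know that this set meets $q(V)$), and, more seriously, a connected subset of the graph $\TT_\e(\Y)$ can leave a small neighborhood of $q(p)$ and re-enter it along a different edge at a branch vertex, or after running around a cycle, so its intersection with $q(W)$ can be disconnected no matter how small $W$ is. Nothing in your argument excludes this, because you never use the two quantities that make the exclusion possible: the height of $\phi(B_\delta(x'))$, which is at most about $2\delta$, and a global feature size of $\TT_\e(\Y)$. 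This is exactly where the paper's proof goes a different way: it takes $V$ to be a metric ball of radius $r\leq L/4$, with $L$ the minimum edge height of $\TT_\e(\Y)$, and argues by contradiction that a failure of connectivity would produce a loop --- one arc inside $U=\pbd(\tx)$ of height $<2\delta$, one inside $V$ of height $<2r$ --- whose image under $q$ would have to be an essential loop of height $<2r+2\delta<L$, which cannot exist in the graph. Without some quantitative input of this kind (comparing $\delta$ and the size of $V$ to the scale of the graph), the connectivity statement you aim for in (iii) is simply false in general.

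Your fallback in the last paragraph --- shrink $W$ until it ``meets each fiber of $q$ in at most one connected piece'', on the grounds that $q$ is locally a homeomorphism below the scale of the critical values --- also does not work. The fibers of $q$ are entire connected components of level sets of $g_\e$ in $\Y_\e$: one-dimensional sets of width up to $2\e$ in the $t$-direction and of possibly large extent in $\Y$, and $q$ collapses each of them to a point, so it is nowhere close to a local homeomorphism. Concretely, if $y$ lies at or near a vertex of $\Y$, a single level-set component can approach $(y,t)$ along two different incident edges while being connected only far outside $W$; no shrinking of your product box merges these two strands, and whether they can be joined inside $\pbd(x')\cap W$ is precisely the nonlocal question that the paper answers with the short-loop argument. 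So the purely local ``normal form plus contractible $q(W)$'' strategy cannot be completed as stated; to repair it you would have to reintroduce the comparison between $2\delta$ plus the diameter of your neighborhoods and the minimum edge height of $\TT_\e(\Y)$ (tacitly taking $\delta$ small, as the paper does), or an equivalent argument ruling out short essential loops.
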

\begin{proof}
Fix $x \in \X$, and consider $\pbd(x) \subset \Y_\e$.  
Choose $(y,t) \in \pbd(x)$ and let $W$ be a neighborhood of $(y,t)$.  
Let $\bar r$ such that $B_{\bar r}(y,t)$, the ball of radius $\bar r$ using the metric 
\begin{equation*}
d_{\Y_\e}((y,t),(y',t')) = \max(d_g(y,y'),|t-t'|), 
\end{equation*}
is contained in $W$.
Let $L$ be the minimum height of any edge in $\TT_\e(\Y)$.
Then set $r = \min(\bar r, L/4)$ and let $V = B_r(y,t)$.

We need show that for any $\tx$ such that $\pbd(\tx) \cap V \neq \emptyset$, $\pbd(\tx) \cap V$ is path connected.  
Let $(y_1,t_1)$ and $(y_2,t_2)$ be  in the intersection and, seeking a contradiction, assume that they are in different path components of $U \cap V$.
Then since $U$ is path connected, there is a path $\gamma_1 $ from $(y_1,t_1)$ to $(y_2,t_2)$ which stays completely inside of $U$.
Because this path is in $U = \pbd(\tx)$, there is an $x_s\in B_\delta(\tx)$ such that $\gamma_1(s) \in \pb(x_s)$ and so $g(\gamma_1(s)) = f(x_s)$.  
Since $x_s \in B_\delta(\tx)$, this implies that $g(\gamma_1(s)) \in B_\delta( f(\tx))$ and thus $\height(\gamma_1)<2\delta$. 
As $V$ is path connected, there is a path $\gamma_2$ from $(y_2,t_2)$ to $(y_1,t_1)$ which stays completely inside of $V$.
So, $g(\gamma_2(s)) \in B_r(y,t)$ and thus  $\height(\gamma_2)<2r$.

We can now consider the paths $q(\gamma_1)$ and $q(\gamma_2)$ in $\TT_\e(\Y)$.
As $U \cap V$ is not path connected, 
 there is a point $v \in\gamma_2$ which is not in $U$.
We want to show that $q(v) \not \in q(\gamma_1)$.
By definition, $\bar\phi$ is the map such that $q\bar\phi(z) = \phi(z)$ for any $z \in \X$.
Thus
\begin{equation*}
 q(U) = q(\pbd(\tx)) = q(\bar\phi(B_\delta(x))) = \phi(B_\delta(x)).
\end{equation*}
Again seeking a contradiction, if $q(v) \in q(U) = \phi(B_\delta(\tx))$, there is an $x_v \in B_\delta(\tx)$ such that $\phi(x_v) = q(v)$.
But this implies that $v \in \bar\phi(x_v)$ and thus $v \in \pbd(\tx) = U$, contradicting our original assumption, so we must have $q(v) \not \in q(\gamma_1) \subset q(U)$.

This implies that the loop $q(\gamma_1\gamma_2)$ is nontrivial in $\TT_\e(\Y)$.
However, $$\height(p(\gamma)) < 2r + 2\delta < L,$$ and therefore cannot go around a loop in $\TT_\e(\Y)$.
Thus, the loop cannot be nontrivial, so the original assumption that $\pbd(\tx)\cap V$ is not path connected must be false.
\end{proof}

Thus, since $\bar \phi$ satisfies the requirements for Theorem~\ref{Thm:Selection}, there exists a continuous selection $\widetilde \phi_\delta:\X \to \Y_\e$ of $\pbd$, i.e., a map $\tilde \phi_\delta:\X \to \Y_\e$ satisfying $\tilde \phi_\delta(x) \in \bar\phi(B_\delta(x))$ for all $x \in \X$.
Likewise, there exists a continuous selection $\widetilde\psi_\delta:\Y \to \X_\e$ for $\bar \psi_\delta$.
Let $p_1$ be either the map $\X_\e \to \X$ or $\Y_\e \to \Y$, defined by projection onto the first factor.
Then we will define our maps for the functional distortion distance to be $\Phi = p_1 \circ \widetilde \phi_\delta:\X \to \Y$ and $\Psi = p_1 \circ\widetilde \psi_\delta:\Y \to \X$.
Note that $\Phi$ and $\Psi$ depend on the choice of $\delta$.

In order to prove the main result of this section, Lemma \ref{Lem:FD leq Inter}, we need the following two technical lemmas.
Let $\iota: (\X,f) \to \TT_\e(\X,f)$ be the map which maps $x$ to the class of $(x,0)$. Moreover, let $\kappa=p_1q^{-1}$. Note that $\iota^{-1} \subseteq \kappa$. Similarly, define $\iota_\e: \TT_\e(\X,f) \to \TT_{2\e}(\X,f)$ and $\kappa_\e=p_1q_\e^{-1}$, where $q_\e$ is the quotient map $\TT_\e(\X,f)\times[-\e,\e] \to \TT_\e(\X,f)$.

\begin{lemma}
\label{Lem:IotaBalls}
 $\kappa B_r \iota(x) \subseteq B_{r+2\e}(x)$.
\end{lemma}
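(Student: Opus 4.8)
The goal is to show that applying the map $\kappa = p_1 q^{-1}$ to a ball $B_r(\iota(x))$ in $\TT_\e(\X,f)$ lands inside the enlarged ball $B_{r+2\e}(x)$ in $\X$. The natural approach is to trace through what each operation does to function-value ranges, since the balls $B_s(\cdot)$ are, by the remark after the definition of $d_f$, exactly the sets of points co-connected through a preimage interval of length $s$. So the whole argument is really about bounding the height of a path in $\X$ obtained by unwinding a path in $\TT_\e(\X,f)$.

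First I would fix a point $z \in \kappa B_r \iota(x)$; by definition there is a point $w \in \X_\e = \X \times [-\e,\e]$ with $q(w) = $ some point in $B_r(\iota(x)) \subseteq \TT_\e(\X,f)$ and $z = p_1(w)$, i.e. $w = (z,t)$ for some $t \in [-\e,\e]$. I want to exhibit a path in $\X$ from $x$ to $z$ of height at most $r + 2\e$. The key observation is the relationship between $d_f$ on $\X$, the smoothed metric $d_{f_\e}$ on $\TT_\e(\X,f)$, and the quotient $q$: a path in $\TT_\e(\X,f)$ of height $h$ lifts (via $q^{-1}$) to a path in $\X_\e$ of height at most $h$ — because $f_\e(x,t) = f(x) + t$, the fiber over a point of $\TT_\e$ moves the $t$-coordinate but keeps $f_\e$ constant — and then projecting by $p_1$ to $\X$ changes the height by at most the diameter of the $t$-interval, which is $2\e$. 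Concretely: $q(x,0) = \iota(x)$, and a path in $B_r(\iota(x))$ from $\iota(x)$ to $q(z,t)$ has $f_\e$-height $< r$; lift it to a path in $\X_\e$ from $(x,0)$ to $(z,t)$ of $f_\e$-height $< r$; push to $\X$ by $p_1$, obtaining a path from $x$ to $z$ whose $f$-values differ from the corresponding $f_\e$-values by at most $\e$ at each point, hence of $f$-height $< r + 2\e$. This shows $d_f(x,z) \le r + 2\e$, i.e. $z \in B_{r+2\e}(x)$.

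The step I expect to require the most care is the lifting claim: that $q^{-1}$ of a connected set of $f_\e$-height $<r$ in $\TT_\e$ contains a connected set of $f_\e$-height $<r$ in $\X_\e$ realizing the two prescribed endpoints. One has to be a little careful because $q^{-1}$ of a point is a whole (connected) fiber, so a priori the lift is a multimap; but since $q$ is a quotient map collapsing connected fibers of a map to a graph, any path downstairs lifts to a path upstairs of no greater height — this is essentially the same connectedness-of-fibers argument used in item (4) before Lemma~\ref{Lem:ELC}, and the height is controlled because $f_\e$ is $q$-invariant. Once that lifting is in hand, the height bookkeeping ($|f - f_\e| \le \e$ under $p_1$, applied at both ends of the inequality, giving $+2\e$ total) is routine. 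I would also note the analogous statement with $\X$ replaced by $\Y$ holds verbatim, and that the "$\iota^{-1} \subseteq \kappa$" remark is what lets one treat $\iota(x)$ and then come back down.
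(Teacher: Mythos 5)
Your proposal is correct and follows essentially the same route as the paper's proof: pull the short path in the smoothed graph back through $q^{-1}$, using connectedness of the fibers to obtain a path in the thickened space between the prescribed lifts of the endpoints with the same range of smoothed function values, and then project by $p_1$, losing at most $2\e$ in height since the function values before and after smoothing differ by at most $\e$ pointwise. The lifting step you flag as delicate is handled in the paper by exactly the argument you sketch, namely that the preimage of the path is path connected because $q$ preserves path components.
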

\begin{proof}
 Let $x \in \X$ be given, and let $(\tx,\tilde t) \in B_r(\iota(x))$.
 We want to show that $\tx \in B_{r+2\e}(x)$.
 
 First, $\iota(x)=[x,0]$ is by definition the class of $(x,0)$ in $\X_\e$.
 Since $(\tx,\tilde t) \in B_r(\iota(x))$, there is a path $\gamma$ of height at most $r$ between $[x,0]$ and $[\tx,\tilde t]$ in $\TT_\e(\X)$.
 Consider the set  $q\inv(\gamma) \subset \X_\e$.  
 As the $g_\e$ function values of the points in $q\inv(\gamma)$ are the same as the $g_\e$ function values of the corresponding points in $\gamma$, we know $g_\e(q\inv(\gamma)) = g_\e(\gamma)$.
 Then since $g(x')$ is within $\e$ of $g_\e(x',t)$ for each $x'$, we have $g(\kappa(\gamma)) = g(p_1q\inv(\gamma)) \subset B_\e(g_\e(\gamma))$.
 
 Because the image of $q\inv(\gamma)$ under $q$ is path connected and $q$ preserves path components, $q\inv(\gamma)$ is path connected as well.
 In particular, $(x,0)$ and $(\tx,\tilde t)$ are in this set, so there is a path $\zeta$ between them.
 
 Breaking up $\zeta$ into its $\X$ and its $[\e,\e]$ component, consider $\zeta(s) = (\zeta_\X(s),\zeta_\e(s))$.
 Then $\zeta_\X(s)$ is a path in $\X$ between $x$ and $\tx$.  
 As it is contained in $q\inv(\gamma)$, this also means that $\height(\zeta_\X) \leq \height(\gamma)+2\e \leq r+2\e$, and so $\tx \in B_{r+2\e}(x)$.
\end{proof}
Note that the previous lemma can also be stated using $\iota_\e$, so $\kappa_\e B_r \iota_\e \subseteq B_{r+2\e}$.
Since this lemma works for $r=0$, this also implies that $\kappa\iota \subset B_{2\e}$.

\begin{lemma}
\label{Lem:PhiPsiBall}
  $\Psi\Phi(x) \in B_{6\e+2\delta} (x)$.
\end{lemma}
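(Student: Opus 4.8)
The plan is to track the point $x$ through the whole construction, bounding the ball in which it can land after each stage, and then compose the bounds. The key structural fact is that $\Phi = p_1 \circ \widetilde\phi_\delta$ where $\widetilde\phi_\delta(x) \in \bar\phi(B_\delta(x)) \subseteq \Y_\e$, and similarly for $\Psi$; combined with the interleaving condition $\psi_\e \circ \phi = \iota_{2\e} \circ \iota$ (the composite $(\X,f) \to \TT_{2\e}(\X,f)$ is the canonical map), this should give the result. Concretely, I would first observe that $\widetilde\phi_\delta(x) \in \bar\phi(B_\delta(x))$ means $\phi(\widetilde\phi_\delta(x)\text{'s class})$... more carefully: there is some $x' \in B_\delta(x)$ with $\widetilde\phi_\delta(x) \in \bar\phi(x')$, i.e. $q(\widetilde\phi_\delta(x)) = \phi(x')$ in $\TT_\e(\Y)$.

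The main step is a distance-distortion estimate for $\phi$ and $\psi$ individually, analogous to Lemma~\ref{Lem:IotaBalls}. I would show: if $y \in \bar\phi(x')$ then $\psi$ applied appropriately lands near $x'$. Precisely, since $\psi_\e \circ \phi = \iota_{2\e}\circ\iota$ as maps $(\X,f)\to\TT_{2\e}(\X,f)$, and $\Psi(y) = p_1\widetilde\psi_\delta(y)$ with $\widetilde\psi_\delta(y)\in\bar\psi(B_\delta(y))$, I can chase: $\Psi\Phi(x) = p_1\widetilde\psi_\delta(\widetilde\phi_\delta(x))$... but wait, $\widetilde\psi_\delta$ takes input in $\Y$, not $\Y_\e$, so I actually need $\Phi(x) = p_1\widetilde\phi_\delta(x) \in \Y$ as the input. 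So there is a point $x'\in B_\delta(x)$ with $q(\widetilde\phi_\delta(x)) = \phi(x') \in \TT_\e(\Y)$, and then a point $y' \in B_\delta(\Phi(x))$ with $q_\e(\widetilde\psi_\delta(\Phi(x))) = \psi(y') \in \TT_\e(\X)$. I would need a lemma relating $\Phi(x) = p_1\widetilde\phi_\delta(x)$ back to $\widetilde\phi_\delta(x)$ as a subset of $\Y_\e$ — projecting onto the first factor moves a point within its own class's preimage, so $d_{\Y}$-distance to $y' \in B_\delta(\Phi(x))$ translates to a bounded $\TT_\e(\Y)$-distance, roughly picking up $2\e$ from the smoothing collapse plus the $\delta$.

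Then I assemble the chain: starting from $\iota(x) \in \TT_\e(\X)$, the map $\phi$ sends $x'$ (within $\delta$ of $x$) to $\phi(x')$, which is within bounded distance of $\phi(x)$; applying $\psi_\e$ gives $\psi_\e\phi(x') = \iota_{2\e}\iota(x')$, which is $\iota_{2\e}\iota$ of a point within $\delta$ of $x$; then $\psi$ of a point $y'$ within $\delta$ of $\Phi(x) = p_1\widetilde\phi_\delta(x)$ lands near $\psi(\phi(x'))$. Using Lemma~\ref{Lem:IotaBalls} (with $r=0$, giving $\kappa\iota \subset B_{2\e}$, and its $\iota_\e$ version) to convert between $\TT_\e(\X)$-balls and $\X$-balls costs $2\e + 2\e = 4\e$, plus the two $\delta$'s from the two Michael selections, plus the remaining $2\e$ from the projection/smoothing slack in the $\Y$-side step — totalling $6\e + 2\delta$. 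The arithmetic bookkeeping — making sure each $2\e$ is accounted for exactly once and the ball radii add up correctly under the triangle inequality for $d_f$ — is the part most likely to need care; the conceptual content is just that $\widetilde\phi_\delta$, $\widetilde\psi_\delta$ are continuous selections of the $\delta$-thickened multimaps and that $\psi$ undoes $\phi$ up to the canonical inclusion.
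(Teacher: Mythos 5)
Your proposal is correct and follows essentially the same route as the paper: the paper simply packages your pointwise chase as a calculus of compositions, writing $\Psi\Phi(x) \in \kappa\,\psi\, B_\delta\, \kappa\, \phi\, B_\delta(x)$, commuting the $\delta$-balls past the function-preserving maps, invoking the interleaving identity $\psi_\e\phi=\iota_\e\iota$, and applying Lemma~\ref{Lem:IotaBalls} (and its $\kappa_\e$/$\iota_\e$ variant) to absorb each smoothing step into a $2\e$ enlargement, arriving at exactly your accounting $4\e+2\e+2\delta=6\e+2\delta$. The only nits are notational (the canonical composite is $\iota_\e\circ\iota$ in the paper's notation, and the bookkeeping you defer is precisely the short chain of inclusions the paper writes out), not mathematical.
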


\begin{proof}
By definition of $\Phi$ and $\Psi$, we have
\begin{align*}
\Phi(x) & \in \kappa \phi B_\delta(x),\\
\Psi(y) & \in \kappa \psi B_\delta(y),
\end{align*}
so
\begin{equation*}
\Psi  \Phi(x) \in \kappa  \psi  B_\delta \kappa \phi B_\delta (x).
\end{equation*}
Since $\psi$ preserves function values, a path $\gamma$ in $\Y$ is sent to a path $\psi(\gamma)$ of the same height.
Thus $\psi B_\delta \subseteq B_\delta \psi$ and we get
\begin{equation*}
\Psi \Phi(x) \in \kappa B_\delta  \psi \kappa B_\delta  \phi  (x).
\end{equation*}
Now using, Lemma \ref{Lem:IotaBalls}, the definition of the interleaving, and the fact that for any map $\nu$, $x \in \nu\inv\nu(x)$, we have
\begin{align*}
 \psi  \kappa  \phi 
& \subseteq \iota\inv_\e  \iota_\e  \psi  \kappa \phi \\
& \subseteq \kappa_\e  \iota_\e  \psi  \kappa \phi \\
& = \kappa_\e  \psi_\e  \iota \kappa  \phi \\
& \subseteq \kappa_\e \psi_\e B_{2\e} \phi \\
& \subseteq \kappa_\e B_\e \psi_\e \phi \\
& = \kappa_\e  B_\e \iota_\e \iota \\
& \subseteq B_{4\e} \iota,
\end{align*}
so we have 
\begin{align*}
 \Psi  \Phi(x) 
 &\in \kappa B_\delta \psi  \kappa \phi  B_\delta (x) \\
 &\subseteq \kappa B_\delta B_{4\e} \iota  B_\delta (x) \\
 &\subseteq \kappa B_\delta  B_{4\e}  B_\delta \iota (x) \\
 &= \kappa B_{4\e+2\delta} \iota (x) \\
 &\subseteq B_{6\e+2\delta} (x). \qedhere
\end{align*}
\end{proof}

Finally, we can  prove the main result of the section.

\begin{lemma}
\label{Lem:FD leq Inter}
Let $f:\X \to \R$ and $g:\Y \to \R$.  
Then 
\begin{equation*}
 d_{FD}(f,g) \leq 5\inter(f,g).
\end{equation*}

\end{lemma}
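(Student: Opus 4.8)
The plan is to turn the given $\e$-interleaving into a pair of maps realizing (up to $O(\delta)$) the functional distortion distance, where $\e > \inter(f,g)$ and $\delta>0$ is the auxiliary parameter of the selection construction. I would take the interleaving $\phi,\psi$, form the continuous selections $\widetilde\phi_\delta,\widetilde\psi_\delta$, set $\Phi=p_1\circ\widetilde\phi_\delta$ and $\Psi=p_1\circ\widetilde\psi_\delta$, bound each of the three quantities in Definition~\ref{Def:FuncDistortion} by $5\e+O(\delta)$, and then let $\delta\to0$ and $\e\downarrow\inter(f,g)$. The two sup-norm terms are immediate from the construction: if $\Phi(x)=y$, then $(y,t)\in\bar\phi(x_0)$ for some $x_0$ with $d_f(x,x_0)<\delta$, and since $\phi$ preserves function values and $|t|\le\e$ we get $|f(x)-g(y)|\le|f(x)-f(x_0)|+|f(x_0)-g(y)|\le d_f(x,x_0)+|t|<\delta+\e$; symmetrically for $\Psi$. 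Hence $\max\{\|f-g\circ\Phi\|_\infty,\|g-f\circ\Psi\|_\infty\}\le\e+\delta$.

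The substance of the proof is the bound $|d_f(x,x')-d_g(y,y')|\le 10\e+O(\delta)$ for all $(x,y),(x',y')\in C(\Phi,\Psi)$, which gives $D(\Phi,\Psi)\le 5\e+O(\delta)$. First I would record an elementary \emph{height-distortion} estimate: any map $h$ with $\|f-g\circ h\|_\infty\le c$ sends a path of height $H$ to a path of height at most $H+2c$, so $d_g(\Phi u,\Phi v)\le d_f(u,v)+2(\e+\delta)$ and $d_f(\Psi u,\Psi v)\le d_g(u,v)+2(\e+\delta)$. Then I would split on which leg of $C$ each pair uses. If the two pairs lie on different graphs, say $y=\Phi x$ and $x'=\Psi y'$, then interpolation through $\Phi x'=\Phi\Psi y'$ (resp.\ through $\Psi y=\Psi\Phi x$) together with height-distortion and a single use of Lemma~\ref{Lem:PhiPsiBall} (and its $\X\leftrightarrow\Y$ counterpart) gives both $d_g(y,y')\le d_f(x,x')+8\e+O(\delta)$ and $d_f(x,x')\le d_g(y,y')+8\e+O(\delta)$. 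If both pairs lie on the graph of $\Phi$ (so $y=\Phi x$, $y'=\Phi x'$), then $d_g(y,y')\le d_f(x,x')+2(\e+\delta)$ is again immediate from height-distortion, and the graph-of-$\Psi$ case is symmetric.

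The step I expect to be the main obstacle is the reverse inequality $d_f(x,x')\le d_g(y,y')+O(\e)$ in the ``both on the graph of $\Phi$'' case: here one cannot afford to apply Lemma~\ref{Lem:PhiPsiBall} at both $x$ and $x'$, so instead I would pull a realizing path back through the interleaving. Choose a path $\sigma$ in $\Y$ from $y$ to $y'$ of height $d_g(y,y')$; using the selection data $\phi(x_0)=q(y,t)$, $\phi(x_0')=q(y',t')$ with $t,t'\in[-\e,\e]$ and $d_f(x,x_0),d_f(x',x_0')<\delta$, lift $\sigma$ to a path in $\Y_\e$ (interpolating linearly between $t$ and $t'$ in the second coordinate), of height at most $d_g(y,y')+2\e$, and push it through the quotient $q$ to a path in $\TT_\e(\Y)$ from $\phi(x_0)$ to $\phi(x_0')$ of the same height. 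Applying the smoothed interleaving map $\psi_\e\colon\TT_\e(\Y)\to\TT_{2\e}(\X)$, which is function preserving, hence height preserving, and satisfies $\psi_\e\circ\phi=\iota_\e\circ\iota$, yields a path in $\TT_{2\e}(\X)$ from $\iota_\e\iota(x_0)$ to $\iota_\e\iota(x_0')$ of height at most $d_g(y,y')+2\e$. Two applications of Lemma~\ref{Lem:IotaBalls}, one per level of smoothing, then descend to $\X$ at a cost of $4\e$, so $d_f(x_0,x_0')\le d_g(y,y')+6\e$, whence $d_f(x,x')\le d_g(y,y')+6\e+2\delta$.

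Assembling the cases gives $|d_f(x,x')-d_g(y,y')|\le 10\e+O(\delta)$ throughout $C(\Phi,\Psi)$, hence $D(\Phi,\Psi)\le 5\e+O(\delta)$. Combined with the sup-norm bounds, $d_{FD}(f,g)\le 5\e+O(\delta)$; since $\delta>0$ is arbitrary and $\e$ is any number exceeding $\inter(f,g)$, letting $\delta\to0$ and then $\e\downarrow\inter(f,g)$ yields $d_{FD}(f,g)\le5\inter(f,g)$.
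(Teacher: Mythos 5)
Your proposal is correct, and at the decisive point it takes a genuinely different (and sharper) route than the paper. The skeleton is the same: you build the continuous selections, set $\Phi=p_1\widetilde\phi_\delta$, $\Psi=p_1\widetilde\psi_\delta$, obtain $\|f-g\circ\Phi\|_\infty,\|g-f\circ\Psi\|_\infty\le\e+\delta$, and your mixed-pair case is exactly the paper's Eqns.~(\ref{Eqn:BoundB1})--(\ref{Eqn:BoundB2}), giving $8\e+O(\delta)$. The difference is the reverse inequality for same-type pairs: the paper pushes a minimizing path through $\Psi$ and then invokes Lemma~\ref{Lem:PhiPsiBall} at \emph{both} endpoints, which costs $2(\e+\delta)+2(6\e+2\delta)=14(\e+\delta)$ and is why its proof only concludes $D(\Phi,\Psi)\le 7(\e+\delta)$, i.e.\ the factor $7$ of Theorem~\ref{Thm:Main} rather than the factor $5$ stated in this lemma. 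You instead transport the optimal path itself through the interleaving: lift a minimizing path in $\Y$ to $\Y_\e$ (height increases by at most $2\e$), push it through $q$ and the function-preserving map $\psi_\e$, use $\psi_\e\circ\phi=\iota_\e\circ\iota$ to land at $\iota_\e\iota(x_0)$ and $\iota_\e\iota(x_0')$, and descend with two applications of Lemma~\ref{Lem:IotaBalls} (the second in the form $\kappa_\e B_r\iota_\e\subseteq B_{r+2\e}$ noted after that lemma), yielding $d_f(x_0,x_0')\le d_g(\Phi x,\Phi x')+6\e$ and hence $d_f(x,x')\le d_g(\Phi x,\Phi x')+6\e+2\delta$. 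Every ingredient you use ($\psi_\e$ function preserving, $\TT_{2\e}(\X)\iso\TT_\e(\TT_\e(\X))$, $x\in\kappa\iota(x)$) is already used by the paper in proving Lemma~\ref{Lem:PhiPsiBall}, so the step is sound, and it gives $|d_f-d_g|\le 8\e+O(\delta)$ over all of $C(\Phi,\Psi)$ (your stated $10\e$ is conservative), hence $D(\Phi,\Psi)\le 4\e+O(\delta)$ and in fact $d_{FD}(f,g)\le 4\inter(f,g)$, which in particular proves the claimed bound $5\inter(f,g)$. In short: the paper's argument is shorter because it reuses Lemma~\ref{Lem:PhiPsiBall} as a black box but pays its $6\e+2\delta$ penalty twice and thus only reaches the constant $7$; your argument applies the interleaving to the realizing path directly, which is the sharper use of the data and is what actually substantiates (indeed improves) the constant in the statement.
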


\begin{proof}
Let $\phi:(\X,f) \to \TT_\e(\Y,g)$ and $\psi:(\Y,g) \to \TT_\e(\X,f)$ be an $\e$-interleaving, and thus $\inter(f,g) \leq \e$. 
As proved above, there exists continuous selections $\Phi: \X \to \Y$ and $\Psi: \Y \to \X$ for the multimaps $\bar \phi_\delta$ and $\bar \psi_\delta$.
In particular, this means that for any $x \in \X$, $\Phi(x)$ is a point in $\Y$ such that there is a $x' \in \X$ and a $t \in [-\e,\e]$ with $(\Phi(x),t) \in \bar\phi(x')$ and $d_f(x,x')<\delta$.
So $|f(x)-f(x')|<\delta$ and $f(x') = g(\Phi(x))+t$.
Thus 
\begin{equation*}
 |f(x)-g\Phi(x)| = |f(x)-(f(x')-t)| = |f(x)-f(x')+t| \leq \delta + \e
\end{equation*}
 and hence $\|f-g \circ\Phi\|_\infty \leq \e + \delta$.
Likewise, $\|g-f \circ \Psi\|_\infty \leq \e + \delta$.

 Consider $(x,y),(x',y') \in C(\Phi,\Psi)$.  
 There are two cases to consider; either the pairs are the same type (i.e. $(x,\Phi(x))$ and $(x', \Phi(x'))$), or they are different.
 If we have the same type, so $(x,\Phi(x))$ and $(x', \Phi(x'))$, let $\gamma$ be a minimum height path in $\X$ from $x$ to $x'$.  
 Then $\Phi(\gamma)$ is a path in $\Y$ from $\Phi(x)$ to $\Phi(x')$.
 Since $\|f - g\Phi\|_\infty\leq \e + \delta$, the height of $\Phi(\gamma)$ is at most $2(\e+\delta)$ above the height of $\gamma$.  
 So 
 \begin{equation}
 \label{Eqn:BoundA1}
 \begin{aligned}
  d_g(\Phi(x),\Phi(x')) 
    &\leq \height(\Phi(\gamma)) \\
    &\leq \height(\gamma) + 2(\e + \delta)\\
    &= d_f(x,x') + 2(\e +\delta).
 \end{aligned}
 \end{equation}
 
 To get the other direction, let $\zeta$ be a minimum height path in $\Y$ between $\Phi(x)$ and $\Phi(x')$.  
 Then $\Psi(\zeta)$ is a path in $\X$ from $\Psi\Phi(x)$ to  $\Psi\Phi(x')$.
 A similar argument as above gives
 \begin{equation*}
 \begin{aligned}
  d_f (\Psi\Phi(x),\Psi\Phi(x')) 
    &\leq \height(\Psi(\zeta))\\
    &\leq \height(\zeta) + 2(\e+\delta) \\
	&= d_g(\Phi(x),\Phi(x')) + 2(\e+\delta).
 \end{aligned}
 \end{equation*}
 Thus, using Lemma \ref{Lem:PhiPsiBall} and the triangle inequality,
 \begin{equation}
 \label{Eqn:BoundA2}
\begin{aligned}
 d_f(x,x')& \leq d_f(x,\Psi\Phi(x)) +  d_f(\Psi\Phi(x),\Psi\Phi(x')) +  d_f(\Psi\Phi(x'),x')  \\
      &\leq  d_g(\Phi(x),\Phi(x')) + 2(\e+\delta) + 2(6\e+2\delta)\\
      &\leq  d_g(\Phi(x),\Phi(x')) + 14(\e+\delta)
\end{aligned}
\end{equation}
and therefore combining Eqns.~\ref{Eqn:BoundA1} and \ref{Eqn:BoundA2},
\begin{equation*}
 |d_f(x,x')-d_g(\Phi(x),\Phi(x'))|\leq 14(\e+\delta).
\end{equation*}
Likewise, 
\begin{equation*}
  |d_f(\Psi(y),\Psi(y'))-d_g(y,y')|\leq  14(\e+\delta).
\end{equation*}

 Now assume we have $(x,\Phi(x))$ and $(\Psi(y), y)$ and let $\zeta$ be a minimum height path in $\Y$ between $\Phi(x)$ and $y$.
 Then $\Psi(\zeta)$ is a path in $\X$ between $\Psi\Phi(x)$ and $\Psi(y)$.
 Thus,
 \begin{equation*}
 \begin{aligned}
  d_f(\Psi(y),\Psi\Phi(x)) 
      &\leq \height(\Psi(\zeta)) \\
      & \leq \height(\zeta)+2(\e+\delta) \\
      &= d_g(\Phi(x),y) + 2(\e+\delta)
 \end{aligned}
 \end{equation*}
 and  using  Lemma \ref{Lem:PhiPsiBall} we have
\begin{equation}
\label{Eqn:BoundB1}
 \begin{aligned}
 d_f(x,\Psi(y))
 & \leq d_f(x,\Psi\Phi(x)) +  d_f(\Psi\Phi(x),\Psi(y))  \\
 &\leq  d_g(\Phi(x),y) + 8(\e+\delta).
 \end{aligned}
\end{equation}
Likewise, if we have a minimum height path $\gamma$ in $\X$ between $x$ and $\Psi(y)$, then $\Phi(\gamma)$ is a path in $\Y$ between $\Phi(x)$ and $\Phi\Psi(y)$ with height bounded by $\height(\gamma)+2(\e+\delta)$, and thus
\begin{equation}
\label{Eqn:BoundB2}
 \begin{aligned}
 d_g(\Phi(x),y) 
 &\leq d_g(\Phi(x),\Phi\Psi(y)) +  d_g(\Phi\Psi(y),y)  \\
 &\leq  d_f(x,\Psi(y)) + 8(\e+\delta).
 \end{aligned}
\end{equation}
Therefore, $| d_f(x,\Psi(y)) - d_g(\Phi(x),y) | \leq 8(\e+\delta)$.

 Combining all of these bounds gives 
 \begin{equation*}
  D(\phi,\psi)  = \sup_{\substack{(x,y),(x',y')\\ \in C(\phi,\psi)}} \frac{1}{2} \left| d_f(x,x') - d_g(y,y')\right| \leq 7(\e+\delta).
 \end{equation*}
 and therefore
 \begin{equation*}
  d_{FD}(f,g) = \inf_{\phi,\psi} \max \{ D(\phi,\psi),\|f-g \circ \phi\|_\infty, \|g-f \circ \psi\|_\infty \} \leq 7(\e+\delta).
 \end{equation*} 
 Since this is true for any $\e>\inter(f,g)$ and for any $\delta>0$, this completes the proof.
\end{proof}

Putting together Lemmas \ref{Lem:Inter leq FD} and \ref{Lem:FD leq Inter}, our main theorem is immediate.
\begin{theorem}
\label{Thm:Main}
 The functional distortion metric and the interleaving metric are strongly equivalent.  That is, given any Reeb graphs $(\X,f)$ and $(\Y,g)$,
 \begin{equation*}
  \inter(f,g) \leq d_{FD}(f,g) \leq 7\inter(f,g).
 \end{equation*}
\end{theorem}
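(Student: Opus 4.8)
My plan is to obtain the theorem directly by concatenating the two one-sided estimates that have already been isolated, after recalling that strong equivalence of metrics means exactly that each one bounds a constant multiple of the other.

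For the left inequality $\inter(f,g) \le d_{FD}(f,g)$ I would cite Lemma~\ref{Lem:Inter leq FD} verbatim: given maps $\phi,\psi$ realizing $d_{FD}(f,g) < \e$, pushing them through $\pi_0\circ f^{-1}$ and $\pi_0\circ g^{-1}$ and using functoriality of $\pi_0$ together with the bound $\max\{\|f-g\phi\|_\infty,\|g-f\psi\|_\infty\}\le\e$ produces a cosheaf $\e$-interleaving, so $\inter(f,g)\le\e$ for every such $\e$, whence $\inter(f,g)\le d_{FD}(f,g)$.

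For the right inequality I would appeal to Lemma~\ref{Lem:FD leq Inter}; reading its proof, for each $\e > \inter(f,g)$ and each $\delta > 0$ one gets continuous selections $\Phi,\Psi$ of the thickened multimaps $\bar\phi_\delta,\bar\psi_\delta$ with $\max\{D(\Phi,\Psi),\|f-g\Phi\|_\infty,\|g-f\Psi\|_\infty\}\le 7(\e+\delta)$, and taking the infimum over such $\e$ and letting $\delta\to 0$ gives $d_{FD}(f,g)\le 7\,\inter(f,g)$. Putting the two halves together yields $\inter(f,g)\le d_{FD}(f,g)\le 7\,\inter(f,g)$, which is precisely the asserted strong equivalence.

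I do not expect any obstacle at this final stage, because all the difficulty sits in the \emph{hard direction}, which has already been dealt with: turning an interleaving into functional-distortion maps needs a continuous single-valued selection, provided by the Michael selection theorem (Theorem~\ref{Thm:Selection}) applied to $\bar\phi_\delta$, the one subtle hypothesis of which is $\ELC^0$. That hypothesis is the real crux --- it is verified in Lemma~\ref{Lem:ELC} by picking the local radius $r$ small compared to the minimal edge height of $\TT_\e(\Y)$, so that a loop arising from a hypothetical failure of local path-connectedness would be too short to be non-contractible in $\TT_\e(\Y)$. Granting Lemma~\ref{Lem:ELC}, and the radius-tracking bounds of Lemmas~\ref{Lem:IotaBalls} and~\ref{Lem:PhiPsiBall} that drive the triangle-inequality estimates inside the proof of Lemma~\ref{Lem:FD leq Inter}, the present theorem reduces to the two-line concatenation above.
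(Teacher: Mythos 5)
Your proposal is correct and follows exactly the paper's own route: Theorem~\ref{Thm:Main} is obtained there simply by concatenating Lemma~\ref{Lem:Inter leq FD} with Lemma~\ref{Lem:FD leq Inter}, with the only content of the final step being the passage to the infimum over the interleaving parameter and letting $\delta \to 0$, which you carry out in the same way. One minor remark: the constant stated in Lemma~\ref{Lem:FD leq Inter} is $5$, while its proof (and the theorem) yield $7$; your reading of that proof, giving $d_{FD}(f,g) \leq 7\,\inter(f,g)$, is the one consistent with the theorem as stated.
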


\section{Relationship Between the Interleaving and Bottleneck Distances}
\label{sec:bottleneck}

Having strongly equivalent metrics means that we can quickly pass back and forth many of the properties associated to the metrics.
For example, the bottleneck stability bound for persistence diagrams in terms of the functional distortion distance~\cite{Bauer2014} says the following (for the definitions of the persistence diagrams $\Dg_0(f), \ExDg_1(f)$ associated to a function~$f$ and of the bottleneck distance $d_B$ we refer the reader to~\cite{EH09}):
\begin{theorem}
Given two Reeb graphs $(\X,f)$ and $(\Y,g)$,
\begin{equation*}
 d_B(\Dg_0(f),\Dg_0(g)) \leq d_{FD}(f,g)
\end{equation*}
 and 
\begin{equation*}
 d_B(\ExDg_1(f),\ExDg_1(g)) \leq 3d_{FD}(f,g).
\end{equation*}
\end{theorem}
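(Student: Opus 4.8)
The plan is to convert a pair of functional-distortion maps into interleavings of the two persistence modules whose diagrams are $\Dg_0$ and $\ExDg_1$, and then invoke the algebraic stability theorem for persistence modules. Recall that $\Dg_0(f)$ is the diagram of the sublevel-set filtration $a \mapsto H_0(\X_{\le a})$, where $\X_{\le a} := f\inv((-\infty,a])$, while $\ExDg_1(f)$ records the loops of the Reeb graph and, for a finite graph with $f$ monotone on edges, is read off from the degree-$1$ interlevelset (levelset-zigzag) persistence module $I \mapsto H_1(f\inv(I))$ indexed by bounded open intervals ordered by inclusion. Fix $\e > d_{FD}(f,g)$ together with maps $\phi\colon \X \to \Y$ and $\psi\colon \Y \to \X$ for which $\max\{D(\phi,\psi),\, \|f - g\circ\phi\|_\infty,\, \|g - f\circ\psi\|_\infty\} < \e$.

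\emph{Degree $0$.} This is the straightforward half. Since $\|f - g\circ\phi\|_\infty < \e$ we have $\phi(\X_{\le a}) \subseteq \Y_{\le a+\e}$ for every $a$, so $\phi$ induces maps $H_0(\X_{\le a}) \to H_0(\Y_{\le a+\e})$ that are natural in $a$ (because $\phi$ is a single fixed map), and symmetrically $\psi$ induces maps shifted by $\e$ the other way. To identify the two composites with the structure maps of the filtrations, take any $x$ with $f(x) \le a$: the pairs $(x,\phi(x))$ and $(\psi\phi(x),\phi(x))$ both lie in $C(\phi,\psi)$, whence $d_f(x,\psi\phi(x)) \le 2D(\phi,\psi) < 2\e$, so there is a path from $x$ to $\psi\phi(x)$ of $\height$ less than $2\e$ which, since $f(x) \le a$, stays inside $\X_{\le a+2\e}$. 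Hence $x$ and $\psi\phi(x)$ determine the same element of $H_0(\X_{\le a+2\e})$, i.e.\ $(\psi\phi)_*$ is exactly the inclusion-induced map, and symmetrically for $(\phi\psi)_*$. Thus the two modules are $\e$-interleaved; algebraic stability gives $d_B(\Dg_0(f),\Dg_0(g)) \le \e$, and letting $\e \searrow d_{FD}(f,g)$ proves the first inequality.

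\emph{Degree $1$.} The first two observations carry over to $I \mapsto H_1(f\inv(I))$: from $\|f - g\circ\phi\|_\infty < \e$ we get $\phi(f\inv(I)) \subseteq g\inv(I^\e)$, hence natural maps $H_1(f\inv(I)) \to H_1(g\inv(I^\e))$, symmetrically for $\psi$; and $d_f(x,\psi\phi(x)) < 2\e$ gives $\psi\phi(f\inv(I)) \subseteq f\inv(I^{2\e})$. The step that genuinely requires work — and the one I expect to be the main obstacle — is to show that $(\psi\phi)_*$ on $H_1$ coincides with the map induced by the inclusion $f\inv(I) \hookrightarrow f\inv(I')$ for a suitably enlarged interval $I' \supseteq I^{2\e}$. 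In contrast with the $\pi_0$ case, the bound ``$\psi\phi$ displaces every point by at most $2\e$ in the $d_f$-metric'' does not by itself make $\psi\phi$ homotopic to the identity, so one must build, for each $1$-cycle $\gamma \subseteq f\inv(I)$, an explicit homology between $\gamma$ and $\psi\phi(\gamma)$ out of the bounded-$\height$ paths connecting each $x$ to $\psi\phi(x)$, and check that it is supported over a controlled enlargement of $I$; interpolating between neighbouring such paths consumes additional vertical room, and a careful accounting — together with matching the correct bar types in the extended diagram — produces a $3\e$-interleaving rather than an $\e$-interleaving, which is the origin of the factor $3$ and the crux of the estimate. Granting this, the corresponding stability theorem for extended persistence diagrams gives $d_B(\ExDg_1(f),\ExDg_1(g)) \le 3\e$, and $\e \searrow d_{FD}(f,g)$ completes the proof.

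As a side remark, combining these two inequalities with Theorem~\ref{Thm:Main} immediately yields the analogous bottleneck stability statements in terms of the interleaving distance, namely $d_B(\Dg_0(f),\Dg_0(g)) \le 7\,\inter(f,g)$ and $d_B(\ExDg_1(f),\ExDg_1(g)) \le 21\,\inter(f,g)$.
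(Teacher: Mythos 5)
A preliminary remark on the comparison you asked for: the paper does not prove this statement at all --- it is quoted from the functional distortion distance paper \cite{Bauer2014} and only then combined with Theorem~\ref{Thm:Main} to obtain the interleaving version. So there is no internal proof to measure your argument against; it has to stand on its own or defer correctly to that reference.

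Your degree-$0$ half does stand on its own, and is the standard argument: the sup-norm bounds give maps of the sublevel-set $H_0$ modules shifted by $\e$, and your use of $D(\phi,\psi)$ --- both $(x,\phi(x))$ and $(\psi\phi(x),\phi(x))$ lie in $C(\phi,\psi)$, hence $d_f(x,\psi\phi(x)) \le 2D(\phi,\psi) < 2\e$, hence a connecting path of height $<2\e$ that stays in $\X_{\le a+2\e}$ --- correctly identifies the composites with the structure maps on generators, so algebraic stability gives the first inequality. (Strictly, $\Dg_0$ should also be treated for the superlevel filtration, but that is verbatim symmetric.)

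The degree-$1$ half has a genuine gap, and it is exactly the step you label ``the main obstacle'' and then grant. Two things are missing. First, you never establish that $(\psi\phi)_*$ on $H_1(f\inv(I))$ agrees with the inclusion-induced map into $H_1(f\inv(I'))$ for a controlled enlargement $I'$: as you yourself note, the pointwise bound $d_f(x,\psi\phi(x))<2\e$ does not by itself yield a homotopy or a chain-level homology; one must actually construct the interpolation between the connecting paths and account for the vertical room it consumes, and nothing in your sketch shows that this room is a bounded multiple of $\e$, let alone that the resulting interleaving constant is $3\e$. The factor $3$ --- which is the entire content of the second inequality --- is therefore asserted rather than derived. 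Second, even granting such an interleaving of the interval-indexed modules $I\mapsto H_1(f\inv(I))$, converting it into a bottleneck bound for $\ExDg_1$ requires the correspondence between interlevel-set (levelset zigzag) persistence and extended persistence, together with a stability theorem valid for modules indexed by intervals; this is not the ordinary algebraic stability theorem you invoke in degree $0$, and it must be stated, justified, and checked not to introduce further constants. As written, the second inequality is a plausible plan whose crux is outsourced to ``granting this''; to make it a proof you must either carry out that analysis or simply cite \cite{Bauer2014}, where it is done.
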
 
 
Combining this result with Theorem~\ref{Thm:Main} gives an immediate stability result relating the interleaving distance with the bottleneck distance.
\begin{theorem}
Given two Reeb graphs $(\X,f)$ and $(\Y,g)$,
\begin{equation*}
 d_B(\Dg_0(f),\Dg_0(g)) \leq 7\inter(f,g)
\end{equation*}
 and 
\begin{equation*}
 d_B(\ExDg_1(f),\ExDg_1(g)) \leq 21\inter(f,g).
\end{equation*}
 
\end{theorem}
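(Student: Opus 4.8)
The plan is to obtain this result as an immediate corollary by composing the two inequality chains already in hand. The key observation is that both displayed bounds factor through the functional distortion distance $d_{FD}(f,g)$: on one side we already have bottleneck stability of the persistence diagrams with respect to $d_{FD}$ (the preceding theorem, from~\cite{Bauer2014}), and on the other side we have the strong equivalence of $d_{FD}$ with the interleaving distance (Theorem~\ref{Thm:Main}). So no new construction is needed; the content is purely the transitivity of $\leq$.

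Concretely, first I would invoke the stability theorem stated just above, which gives $d_B(\Dg_0(f),\Dg_0(g)) \leq d_{FD}(f,g)$ for the ordinary $0$-dimensional diagrams and $d_B(\ExDg_1(f),\ExDg_1(g)) \leq 3\,d_{FD}(f,g)$ for the extended $1$-dimensional diagrams. Next, I would apply the right-hand inequality of Theorem~\ref{Thm:Main}, namely $d_{FD}(f,g) \leq 7\,\inter(f,g)$. Chaining the first bound with this yields $d_B(\Dg_0(f),\Dg_0(g)) \leq 7\,\inter(f,g)$; chaining the second yields $d_B(\ExDg_1(f),\ExDg_1(g)) \leq 3 \cdot 7\,\inter(f,g) = 21\,\inter(f,g)$, which is exactly the claim.

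There is essentially no obstacle here: all the real work lives in Theorem~\ref{Thm:Main} (in particular the hard direction, Lemma~\ref{Lem:FD leq Inter}, which is where the constant $7$ comes from) and in the cited bottleneck stability result, and once those are available the present theorem is a one-line composition with no extra hypotheses. The only point that warrants a second look is the bookkeeping of constants --- confirming that the factor $3$ attaches specifically to the extended $1$-dimensional diagram and that $3 \times 7 = 21$ --- but this is purely arithmetic. I would therefore write the proof simply as: ``Combine the above stability theorem with Theorem~\ref{Thm:Main}.''
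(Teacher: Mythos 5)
Your proposal is correct and is exactly the paper's argument: the theorem is stated as an immediate consequence of combining the bottleneck stability bounds in terms of $d_{FD}$ with the inequality $d_{FD}(f,g) \leq 7\inter(f,g)$ from Theorem~\ref{Thm:Main}. The constant bookkeeping ($1\cdot 7$ and $3\cdot 7 = 21$) is also as in the paper.
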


\section{Discussion}

In this paper, we study the relation between the two existing distance measures for Reeb graphs, and show that they are strongly equivalent on the space of Reeb graphs. 
This relationship will be a powerful tool for understanding convergence properties of the different metrics. 
For example, if we have a Cauchy sequence in one metric, we have a Cauchy sequence in the other and can therefore pass around completeness results. 
This relationship also means that algorithms for computation and approximation of the metrics can be written using whichever method is most helpful and applicable to the context.

These two distances in general may not be the same. An immediate question is whether the relations provided in Theorem \ref{Thm:Main} are tight. 
In particular, it is easy to construct examples where the bound $\inter(f,g) \leq d_{FD}(f,g)$ of Lemma~\ref{Lem:Inter leq FD} is tight; it will be interesting to investigate the tightness of the bound $d_{FD}(f,g) \leq 7\inter(f,g)$ of Lemma~\ref{Lem:FD leq Inter}. 
While that bound is obtained using an arbitrary selection, a better bound may be achievable using a particular optimal selection. 
In addition, this may shed light on whether the bounds given between the bottleneck distance 
of the extended persistence diagrams and the two Reeb graph distances are tight. 
Finally, we will explore the applications of these distance measures to studying the stability of Reeb-like structures, such as Mapper and $\alpha$-Reeb graphs.

\appendix

\section{Appendix: The Category of Constructible Cosheaves}
\label{Sect:Cosheaves}
It has been shown in \cite{deSilva2014} that the category $\Reeb$ is equivalent to a particular class of cosheaves.
This allows a definition of distance for cosheaves to be pulled back to a definition of distance for Reeb graphs.  
A brief overview of the necessary sheaf theory follows; a better introduction can be found in \cite{Curry2013}.

 A \textit{category} $\CatA$ is a collection of objects $A \in \CatA$ with morphisms $f:A \to A'$ between the objects.
 We also require a composition operation which is associative, as well as an identity morphism $\mathbb{1}_A: A \to A$ for each $A \in \CatA$.
 There are several categories used in the definition of the interleaving distance.  
 They are notated as follows:
 \begin{itemize}
  \item $\Set$ consists of sets with morphisms given by set maps.
  \item $\Top$ consists of topological spaces with continuous maps.
  \item $\Int$ consists of open intervals $I \subset \R$ with a unique morphism $I \to J$ iff $I \subseteq J$.
 \end{itemize}
 
A \textit{functor} $F: \CatA \to \CatB$ is a map between categories that sends each object $A$ to an object $F(A)$ and each morphism $f: A \to A'$ to a morphism $F[f]:F(A) \to F(A')$.
We require that the functor respects composition and identities, so $F(\mathbb{1}_A) = \mathbb{1}_{F(A)}$ for every $A \in \CatA$, and $F[g\circ f] = F[g] \circ F[f]$ for every pair of morphisms $f:A \to A'$ and $g: A' \to A''$.

Finally, we have the notion of a \textit{natural transformation} $\eta : F \to G$ between functors $F,G:\CatA \to \CatB$.
It is a collection of morphisms $\eta_A: F(A) \to G(A)$, one for each $A \in \CatA$, such that for any morphism $f:A \to A'$ in $\CatA$,
\begin{equation*}
\begin{tikzcd}
 	\Ffunc (A) \ar{r}{\Ffunc {[}f{]}} \ar{d}{\eta_A}]
	&
	\Ffunc (A') \ar{d}{\eta_{A'}}]
	\\
 	\Gfunc (A) \ar{r}{\Gfunc {[}f{]}}]
	&
	\Gfunc (A').
\end{tikzcd}
\end{equation*}
commutes.

Then a set-valued pre-cosheaf, in our context, is a functor $F: \Int \to \Set$. 
A cosheaf is a pre-cosheaf which satisfies the following property.
For any $\mathcal{I}$, a collection of open intervals whose union is an open interval $U$, 
$\Ffunc(U)$ must be the colimit of the diagram 
\begin{equation*}
 \coprod_{I,J \in \mathcal{I}} \Ffunc(I \cap J) \rightrightarrows \coprod_{I \in \mathcal{I}} \Ffunc(I).
\end{equation*}
A cosheaf $\Ffunc:\Int \to \Set$ is constructible if each $\Ffunc(I)$ is finite and there is a finite set of critical values $S \subset \R$ such that
\begin{itemize}
 \item if $I \subseteq J$ are open intervals with $I \cap S = J \cap S$, then $\Ffunc[I \subseteq J]$ is an isomorphism, and
 \item if $I$ is contained in $(-\infty, \min(S))$ or $(\max(S),\infty)$, $\Ffunc(I) = \emptyset$.
\end{itemize}
The category of constructible cosheaves with morphisms given by natural transformations is denoted $\Cshc$.

Will of these definitions in hand, we have the following theorem from \cite{deSilva2014}.
\begin{theorem}
 The categories $\Reeb$ and $\Cshc$ are equivalent.
\end{theorem}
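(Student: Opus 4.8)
The plan is to follow \cite{deSilva2014} and build the equivalence explicitly, by exhibiting functors in both directions together with natural isomorphisms between their composites and the identities. First I would define $C \colon \Reeb \to \Cshc$ on objects by $C(\X,f) = \pi_0 \circ f^{-1}$, i.e.\ $I \mapsto \pi_0(f^{-1}(I))$ with transition maps induced by inclusions of intervals, and on a morphism $\phi \colon (\X,f)\to(\Y,g)$ by the family $\pi_0(f^{-1}(I)) \to \pi_0(g^{-1}(I))$ coming from the restrictions $\phi|_{f^{-1}(I)}$, which are defined because $\phi$ preserves function values. The first thing to verify is that $C(\X,f)$ really is a \emph{constructible} cosheaf: the cosheaf colimit axiom follows from the fact that $\pi_0$ sends a union of open sets to the colimit, over the pairwise intersections, of the $\pi_0$'s of the pieces; and constructibility follows by taking $S$ to be the finite set of function values at the vertices of $\X$, since for $I \subseteq J$ with $I \cap S = J \cap S$ the inclusion $f^{-1}(I) \hookrightarrow f^{-1}(J)$ is a deformation retract and hence a $\pi_0$-isomorphism, while preimages of intervals disjoint from $[\min S, \max S]$ are empty.

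Next I would construct $D \colon \Cshc \to \Reeb$ via the display-space construction (the ``espace \'etal\'e'' of the cosheaf): for a constructible cosheaf $\Ffunc$ with critical set $S$, set $D(\Ffunc) = \{ (a,s) : a \in \R,\ s \in \Ffunc_a \}$, where $\Ffunc_a = \operatorname{colim}_{I \ni a} \Ffunc(I)$ is the stalk at $a$, topologised so that each $s \in \Ffunc(I)$ determines an open sheet $\{ (a, s_a) : a \in I \}$ lying over $I$. Constructibility makes the projection $(a,s) \mapsto a$ a finite covering over each component of $\R \setminus S$, producing finitely many open edges, and the transition maps $\Ffunc(I) \to \Ffunc(J)$ for $I$ a small symmetric interval around a point of $S$ glue these sheets into a finite regular CW complex on which $\hat f(a,s) = a$ is continuous and monotone on edges; thus $D(\Ffunc) = (D(\Ffunc),\hat f) \in \Reeb$. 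A natural transformation $\eta \colon \Ffunc \to \Gfunc$ induces maps of stalks and hence a function-preserving continuous map $D(\eta)$, so $D$ is functorial.

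The remaining work is to produce the two natural isomorphisms. For $D \circ C \iso \mathrm{id}_{\Reeb}$, I would use the map $\X \to D(\pi_0 \circ f^{-1})$ sending $x$ to $(f(x), [x])$, where $[x]$ is the component of $x$ in $f^{-1}(I)$ for any sufficiently small interval $I \ni f(x)$; this is well defined, function-preserving, continuous, and bijective (injectivity uses that $(\X,f)$ is already a Reeb graph, so distinct points of a level set lie in distinct components of $f^{-1}(I)$ once $I$ is small), hence a homeomorphism by compactness, and naturality is a diagram chase. For $C \circ D \iso \mathrm{id}_{\Cshc}$, I would check that $\pi_0(\hat f^{-1}(I)) \iso \Ffunc(I)$ naturally in $I$ and in $\Ffunc$: for intervals avoiding $S$ or containing a single point of $S$ this is read off directly from the sheets of $D(\Ffunc)$, and the general case follows by applying the cosheaf colimit axiom to both sides.

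The main obstacle is the display-space construction itself and the proof that it lands in $\Reeb$: converting the purely combinatorial data of a constructible cosheaf -- stalks together with the merge and split transition maps at critical values -- into a genuine finite regular CW complex carrying a function monotone on edges, and then pushing through the round-trip bookkeeping, especially the colimit-axiom manipulations needed for $C \circ D \iso \mathrm{id}_{\Cshc}$. The remaining ingredients (functoriality, the deformation-retract arguments, and the invariance-of-domain/compactness arguments) are routine.
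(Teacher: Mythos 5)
The paper does not prove this theorem itself; it is quoted from \cite{deSilva2014}, with only the intuition (the functor $\pi_0\circ f\inv$) sketched in the appendix. Your plan follows exactly the route of that reference: the Reeb cosheaf functor $C=\pi_0\circ f\inv$ in one direction, a display-space functor $D$ in the other, and natural isomorphisms for both composites. The verification that $C(\X,f)$ is a constructible cosheaf (left-exactness-free: $\pi_0$ preserves the \v{C}ech colimit of an open cover; $S=$ vertex values; deformation retractions for $I\subseteq J$ with $I\cap S=J\cap S$) is fine, as is the overall shape of the round-trip arguments.

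There is, however, one concrete error at the heart of your construction of $D$: you set $\Ffunc_a=\operatorname{colim}_{I\ni a}\Ffunc(I)$ and call it the stalk. A cosheaf is \emph{covariant} on $\Int$, so the structure maps point from smaller intervals to larger ones; the poset of open intervals containing $a$ is directed upward under inclusion, and the colimit you wrote is therefore taken in the direction of \emph{growing} intervals. For the Reeb cosheaf of a connected graph this colimit is a single point for every $a$, so $D(\Ffunc)$ would degenerate to (a copy of $\R$ times $\pi_0(\X)$), and neither $D\circ C\iso\mathrm{id}_{\Reeb}$ nor $C\circ D\iso\mathrm{id}_{\Cshc}$ could hold. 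The fiber over $a$ must be the \emph{costalk}, i.e.\ the inverse limit $\lim_{I\ni a}\Ffunc(I)$ over intervals shrinking to $a$ (the sheaf-theoretic stalk dualized); by constructibility this is just $\Ffunc(I)$ for any sufficiently small $I\ni a$, which is what your subsequent description of sheets, finite coverings over the components of $\R\setminus S$, and gluing at critical values implicitly uses. With that correction (and with the sheet over $I$ determined by $s\in\Ffunc(I)$ taken to be the set of pairs $(a,\sigma)$ with $\sigma$ in the costalk at $a$ mapping to $s$ in $\Ffunc(I)$), the remainder of your outline matches the construction in \cite{deSilva2014} and goes through.
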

The intuition for the equivalence comes from the following observation.  
Given a Reeb graph $(\tilde \X,\tilde f)$ of a space $(\X,f)$, the level set $f\inv(a)$ has a point for each connected component of the original space $\X$.  
This can be thought of as the following composition of functors:
\begin{equation*}
\begin{tikzcd}
 \Int \ar{r}{f\inv} & \Top \ar{r}{\pi_0} & \Set
\end{tikzcd}
\end{equation*}
where $f\inv$ takes an interval $I$ to $f\inv(I)\subset \X$ and $\pi_0$ sends a topological space to the set of its connected components.
Given a Reeb graph $(\X,f)$, we call  $\Ffunc: \Int \to \Set$ defined by $\Ffunc(I) = \pi_0f\inv(I)$ its associated cosheaf.

\bibliography{ReebGraphs}

\end{document}